\setlist[1]{itemsep=5pt}
\newcommand{\comment}[1]{}
     \def\@setcopyright{}
     \def\serieslogo@{}
\newtheorem{theorem}{Theorem}[section]
\newtheorem{lemma}[theorem]{Lemma}
\newtheorem{proposition}[theorem]{Proposition}
\newtheorem{conjecture}[theorem]{Conjecture}
\numberwithin{equation}{section}
\begin{document}
\title{Rigidity of proper holomorphic self-mappings of the hexablock}
\author{Enchao Bi}
\address{School of Mathematics and Statistics, Qingdao University, Qingdao, Shandong 266071, P.R. China}

\thanks{ }
\email{bienchao@qdu.edu.cn}

\author{Zeinab Shaaban}
\address{School of Mathematics and Statistics, Wuhan University of Technology, Wuhan, Hubei 430070, P.
R. China}

\email{zenab@whut.edu.cn}

\author{Guicong Su$^{*}$}
\address{School of Mathematics and Statistics, Wuhan University of Technology, Wuhan, Hubei 430070, P.
R. China}
\thanks{* Corresponding author.}
\thanks{ }
\email{suguicong@whut.edu.cn}
\thanks{\textbf{Keywords:} 
hexablock, proper holomorphic self-mapping, 
automorphism group}
\thanks{\textbf{Mathematics Subject Classification (2020):} 
32H02  \;\textperiodcentered\; 32A07 \;\textperiodcentered\; 47A25}
\begin{abstract}

The hexablock \(\mathbb{H}\), introduced by Biswas-Pal-Tomar \cite{Hexablock}, is a Hartogs domain in \(\mathbb{C}^4\) fibered over the tetrablock \(\mathbb{E}\) in \(\mathbb{C}^3\), arising in the context of \(\mu\)-synthesis problems. In this paper, we prove that every proper holomorphic self-map of \(\mathbb{H}\) is necessarily an automorphism. Consequently, we resolve the conjecture \(G(\mathbb{H}) = \mathrm{Aut}(\mathbb{H})\) on the automorphism group structure, originally posed by Biswas-Pal-Tomar in \cite{Hexablock}.

\end{abstract}
\maketitle

\section{Introduction}

In control theory (see \cite{Do, Fra}), the structured singular value of an $n\times n$ matrix $A$ is a cost function that generalizes the operator norm while encoding structural information about admissible perturbations. This value is defined relative to a prescribed linear subspace $E$ of $M_n(\mathbb{C})$-the space of all $n\times n$ complex matrices as follows
\[
\mu_{E}(A) = \frac{1}{\inf\left\{\|X\| : X \in E,\ \det(I - AX) = 0\right\}}
\]
where $\|\cdot\|$ denotes the operator norm of a matrix relative
to the Euclidean norm  on $\mathbb{C}^n$, with the convention that $\mu_E(A) = 0$ if $\det(I - AX) \neq 0$ for all $X \in E$.

The associated $\mu$-synthesis interpolation problem has revealed deep connections with three distinguished domains in complex geometry:
\begin{itemize}
\item The \textit{symmetrized bidisc} $\mathbb{G}_2 \subset \mathbb{C}^2$ for $E = \{\alpha I : \alpha \in \mathbb{C}\}$.
\item The \textit{tetrablock} $\mathbb{E} \subset \mathbb{C}^3$ for $E = \operatorname{diag}_2(\mathbb{C})$ (the space of $2\times 2$ diagonal matrices).
\item The \textit{pentablock} $\mathbb{P} \subset \mathbb{C}^3$ for $E = \left\{\begin{pmatrix} z & w \\ 0 & z \end{pmatrix} : z,w \in \mathbb{C}\right\}$.
\end{itemize}
These domains exhibit exceptionally rich complex-analytic and operator-theoretic properties, stimulating substantial research across several complex variables, functional analysis, and geometric function theory. Their intricate boundary stratifications, automorphism groups, and invariant metrics provide a foundational framework for addressing fundamental questions in complex geometry. Comprehensive treatments of their geometric invariants and connections to $\mu$-synthesis are provided in \cite{Abouhajar,Abouhajar-White-Young,3,4,11,14,21,22,25,56,57,Kosi,59,60,78,79,80,85,86,89,Young,94} and related works.


Let $E$ be the linear subspace consisting of all $2\times 2$ upper triangular complex matrices, and denote the cost function $\mu_E$ in this case by $\mu_{\text{hexa}}$. We introduce the following mapping
$$
\pi: M_2(\mathbb{C}) \to \mathbb{C}^4, \quad A = \begin{pmatrix} a_{11} & a_{12} \\ a_{21} & a_{22} \end{pmatrix} \mapsto (a_{21}, a_{11}, a_{22}, \det A).
$$
Define the associated set  
$$
\mathbb{H}_\mu = \left\{ \pi(A) : A \in M_2(\mathbb{C}),  \mu_{\text{hexa}}(A) < 1 \right\}.
$$

The set $\mathbb{H}_\mu$ is called the $\mu$-hexablock by Biswas-Pal-Tomar \cite{Hexablock}. However, $\mathbb{H}_\mu$ is connected but not open in $\mathbb{C}^4$, and therefore not a domain in $\mathbb{C}^4$. In particular, points of the form $(0,x_1,x_2,x_3)$ are not interior points of $\mathbb{H}_\mu$.

Let $\mathbb{E}$ be the tetrablock in $\mathbb{C}^3$ (refer to \cite{Abouhajar, Abouhajar-White-Young}), which is given by 
$$\mathbb{E} = \left\{ (x_1, x_2, x_3) \in \mathbb{C}^3 : 1 - x_1 z - x_2 w + x_3 z w \neq 0 \quad \forall\;  |z| \leq 1, \forall \;|w| \leq 1 \right\}.$$
We consider a new family of fractional linear maps defined by
\begin{equation}\label{linear}
   \Psi_{z_1,z_2}(a, x_1, x_2, x_3) := \frac{a \sqrt{(1 - |z_1|^2)(1 - |z_2|^2)}}{1 - x_1 z_1 - x_2 z_2 + x_3 z_1 z_2}, 
\end{equation}
where $z_1,z_2\in\overline{\mathbb{D}}$, and $(x_1, x_2, x_3) \in \mathbb{E}$. 

Define  
\[
\mathbb{H} := \left\{ (a, x_1, x_2, x_3) \in \mathbb{C} \times \mathbb{E} : \sup_{z_1,z_2 \in \mathbb{D}} \left| \Psi_{z_1,z_2}(a, x_1, x_2, x_3) \right| < 1 \right\}.
\]
This set $\mathbb{H}$, introduced by Biswas-Pal-Tomar \cite{Hexablock}, is named the \textit{hexablock} owing to the geometric structure of its real slice $\mathbb{H} \cap \mathbb{R}^4$: a bounded set whose boundary consists of precisely four extreme sets and two hypersurfaces.

Unlike the $\mu$-hexablock $\mathbb{H}_\mu$ (which fails to be a domain in $\mathbb{C}^4$), the hexablock $\mathbb{H}$ is a domain in $\mathbb{C}^4$. Moreover, it satisfies the fundamental relation:
\[
\mathbb{H} = \operatorname{int}(\overline{\mathbb{H}}_\mu).
\]
Furthermore, $\mathbb{H}$ is linearly convex, yet neither circular nor convex, and its boundary is not $\mathcal{C}^1$-smooth (see Biswas-Pal-Tomar \cite{Hexablock}).

By its construction as a Hartogs domain over the tetrablock $\mathbb{E} \subset \mathbb{C}^3$, the hexablock $\mathbb{H}$ admits the following characterization: for each $x = (x_1,x_2,x_3) \in \mathbb{E}$, there exists a unique point $(z_1(x), z_2(x)) \in \mathbb{D}^2$ at which the function
\[
(z_1, z_2) \mapsto \left| \frac{1 - x_1 z_1 - x_2 z_2 + x_3 z_1 z_2}{\sqrt{(1 - |z_1|^2)(1 - |z_2|^2)}} \right|^{-1}
\]
attains its supremum over $\mathbb{D}^2$ (see Chapter 3 in Biswas-Pal-Tomar \cite{Hexablock}). Consequently, $\mathbb{H}$ has the explicit representation:
\[
\mathbb{H} = \left\{ (a, x) \in \mathbb{C} \times \mathbb{E} : |a|^2 < e^{-u(x)} \right\},
\]
where $x = (x_1, x_2, x_3)$ and the smooth function $u: \mathbb{E} \to \mathbb{R}^+$ is given by
\begin{equation}\label{u}
u(x) = 2 \log \left| \frac{\sqrt{(1 - |z_1(x)|^2)(1 - |z_2(x)|^2)}}{1 - x_1 z_1(x) - x_2 z_2(x) + x_3 z_1(x) z_2(x)} \right|.
\end{equation}
Recently, Biswas-Pal-Tomar \cite{Hexablock} constructed a special subgroup of the automorphism group of the hexablock. More precisely, they devised a subgroup of $\operatorname{Aut}(\mathbb{H})$ that preserves the tetrablock $\mathbb{E}.$
\begin{theorem}[Theorem 7.10 in \cite{Hexablock}]
  The collection
  \[
  G(\mathbb{H}) = \left\{ T_{\nu,\chi,\omega},\  T_{\nu,\chi,F,\omega} : \nu,\chi \in \operatorname{Aut}(\mathbb{D}),\  \omega \in \mathbb{T} \right\}
  \]
  forms a subgroup of $\operatorname{Aut}(\mathbb{H})$.
\end{theorem}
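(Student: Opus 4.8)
The plan is to exhibit explicit formulas for the two families of maps, verify that each is a biholomorphic self-map of $\mathbb{H}$, and then check the three subgroup axioms (identity, closure under composition, closure under inverses) by bookkeeping on the parameters $(\nu,\chi,\omega)$. First I would record the structure of the maps, using the Hartogs representation $\mathbb{H} = \{(a,x) \in \mathbb{C}\times\mathbb{E} : |a|^2 < e^{-u(x)}\}$. Each $T_{\nu,\chi,\omega}$ should act by a base automorphism $\tau_{\nu,\chi}$ of $\mathbb{E}$ induced by the pair $(\nu,\chi) \in \operatorname{Aut}(\mathbb{D})^2$, together with a fiber map of the form $a \mapsto \omega\, g_{\nu,\chi}(x)\, a$, where $g_{\nu,\chi}$ is a nowhere-vanishing holomorphic function on $\mathbb{E}$ and $\omega \in \mathbb{T}$; the maps $T_{\nu,\chi,F,\omega}$ have the same shape but precompose the base action with the involution $F$ interchanging the first two tetrablock coordinates. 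Since the $(\nu,\chi)$ family together with $F$ is already known to generate $\operatorname{Aut}(\mathbb{E})$, the base components compose predictably.

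Second, and this is the analytic heart of the argument, I would establish a transformation (cocycle) rule for the weight $u$ of \eqref{u}: under the base automorphism $\tau = \tau_{\nu,\chi}$ one has
\[
u(\tau(x)) = u(x) - 2\log\bigl|g_{\nu,\chi}(x)\bigr|
\]
on all of $\mathbb{E}$, with $g_{\nu,\chi}$ holomorphic and zero-free, while the flip $F$ preserves $u$ by the symmetry of \eqref{u} under simultaneously swapping $(x_1,x_2)$ and $(z_1,z_2)$. Granting this identity, the computation $e^{-u(\tau(x))} = e^{-u(x)}|g_{\nu,\chi}(x)|^2$ shows the defining inequality $|a|^2 < e^{-u(x)}$ is preserved exactly when $a$ is scaled by $\omega\, g_{\nu,\chi}(x)$ with $|\omega|=1$; hence $T_{\nu,\chi,\omega}$ maps $\mathbb{H}$ bijectively onto $\mathbb{H}$. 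Holomorphy is immediate from the formula, and bijectivity follows because the base map is an automorphism of $\mathbb{E}$ and the fiber map is an invertible linear map in $a$ with holomorphic nowhere-zero coefficient, so each $T$ is an automorphism of $\mathbb{H}$.

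Finally, the subgroup properties reduce to tracking the parameters. The identity is $T_{\mathrm{id},\mathrm{id},1}$. Composing $T_{\tau}$ and $T_{\sigma}$ multiplies the base automorphisms (which stay in the prescribed family since $\operatorname{Aut}(\mathbb{D})$ is a group and $F^2 = \mathrm{id}$, the flip index being determined by the parity of the number of $F$'s) and multiplies the fiber factors; the cocycle relation $g_{\tau\circ\sigma} = (g_{\tau}\circ\sigma)\,g_{\sigma}$ forced by the transformation rule for $u$, together with $\omega_1\omega_2 \in \mathbb{T}$, shows the composite is again of one of the two prescribed forms. Inverses are the special case of solving $T\circ T' = T_{\mathrm{id},\mathrm{id},1}$, which again lands in the family. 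The main obstacle is squarely the second step: because $u$ is defined only implicitly through the unique maximizer $(z_1(x),z_2(x))$ of the extremal quotient, proving the cocycle identity and pinning down $g_{\nu,\chi}$ requires controlling how this extremal pair transforms under $\tau_{\nu,\chi}$ and how the quotient in \eqref{u} rescales — all the genuine geometry of the tetrablock fibration is concentrated here.
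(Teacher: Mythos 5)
First, a point of reference: the paper you are reading never proves this statement --- it is imported verbatim as Theorem 7.10 of Biswas--Pal--Tomar \cite{Hexablock}, and the proof lives in that reference. So your proposal must be judged on its own merits, not against an internal argument.

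Your structural reading is accurate: by \eqref{form1} and \eqref{form2}, each map acts on the base by $\tau_{\nu,\chi}$ or $\tau_{\nu,\chi,F}\in\operatorname{Aut}(\mathbb{E})$ (Young's classification), and on the fiber by $a\mapsto \omega\, g_{\nu,\chi}(x)\,a$, where
\[
g_{\nu,\chi}(x)=\xi_2\,\frac{\sqrt{(1-|z_1|^2)(1-|z_2|^2)}}{1-x_1\overline{z}_1-x_2\overline{z}_2\xi_2+x_3\overline{z}_1\overline{z}_2\xi_2}
\]
is indeed holomorphic and zero-free on $\mathbb{E}$, since the denominator has the form $1-x_1z-x_2w+x_3zw$ with $|z|,|w|<1$ and so cannot vanish by the defining property of the tetrablock. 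Your reduction is also logically sound: \emph{if} the cocycle identity $u(\tau(x))=u(x)-2\log|g_{\nu,\chi}(x)|$ holds (and $u\circ F=u$), then membership $|a|^2<e^{-u(x)}$ is preserved, each $T$ is biholomorphic with inverse of the same shape, and the subgroup axioms are parameter bookkeeping.

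The genuine gap is that this cocycle identity --- which you yourself call ``the analytic heart'' and ``the main obstacle'' --- is never proved; it is only postulated. Since the identity is logically equivalent to the assertion that $T_{\nu,\chi,\omega}$ maps $\mathbb{H}$ onto $\mathbb{H}$, assuming it assumes essentially the whole theorem; what remains in your write-up (identity element, composition, inverses) is the routine part. Moreover, the route you sketch for attacking it, namely tracking how the implicit maximizer $(z_1(x),z_2(x))$ of \eqref{u} transforms under $\tau_{\nu,\chi}$, is harder than necessary. The efficient path avoids the maximizer entirely: from the definition of $\mathbb{H}$ one has
\[
e^{-u(x)/2}=\inf_{z_1,z_2\in\mathbb{D}}\frac{\left|1-x_1z_1-x_2z_2+x_3z_1z_2\right|}{\sqrt{(1-|z_1|^2)(1-|z_2|^2)}},
\]
so it suffices to show that the supremum $\sup_{z_1,z_2\in\mathbb{D}}|\Psi_{z_1,z_2}(\cdot)|$ defining membership is invariant under $T$. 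This follows from a Möbius change of the parameters $(z_1,z_2)$ dictated by $\nu$ and $\chi$: the kernel $1-x_1z_1-x_2z_2+x_3z_1z_2$ and the weights $1-|z_i|^2$ pick up the same conformal factors, using the elementary identity $1-|B_\alpha(\lambda)|^2=(1-|\alpha|^2)(1-|\lambda|^2)/|\overline{\alpha}\lambda-1|^2$, and the factors cancel in the quotient. That single invariance computation simultaneously proves your cocycle identity, the surjectivity of $T$, and the claim $u\circ F=u$; it is essentially the argument in \cite{Hexablock}. Until it is carried out, your proposal is a correct strategy but not a proof.
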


The automorphisms are explicitly given by:
\begin{align}
T_{\nu,\chi,\omega}(a,x) &= \left( \omega\xi_2 \frac{a\sqrt{(1-|z_1|^2)(1-|z_2|^2)}}{1-x_1\overline{z}_1 - x_2\overline{z}_2\xi_2 + x_3\overline{z}_1\overline{z}_2\xi_2},\  \tau_{\nu,\chi}(x) \right), \label{form1} \\
T_{\nu,\chi,F,\omega}(a,x) &= \left( \omega\xi_2 \frac{a\sqrt{(1-|z_1|^2)(1-|z_2|^2)}}{1-x_1\overline{z}_1 - x_2\overline{z}_2\xi_2 + x_3\overline{z}_1\overline{z}_2\xi_2},\  \tau_{\nu,\chi,F}(x) \right), \label{form2}
\end{align}
where $x = (x_1,x_2,x_3)$ and $\tau_{\nu,\chi}, \tau_{\nu,\chi,F} \in \operatorname{Aut}(\mathbb{E})$ are the tetrablock automorphisms characterized by Young \cite{Young}. Here $\nu = -\xi_1B_{z_1}$, $\chi = -\xi_2B_{-\overline{z}_2}$ with Blaschke factors
\[
B_\alpha(\lambda) = \frac{\lambda - \alpha}{\overline{\alpha}\lambda - 1}, \quad \alpha \in \mathbb{D},
\]
for parameters $\xi_1,\xi_2 \in \mathbb{T}$ and $z_1,z_2 \in \mathbb{D}$.

In their foundational work on the hexablock, Biswas-Pal-Tomar \cite{Hexablock} formulated the fundamental rigidity conjecture:
\begin{conjecture} 
$G(\mathbb{H}) = \mathrm{Aut}(\mathbb{H})$.
\end{conjecture}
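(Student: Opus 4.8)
The plan is to establish the missing inclusion $\mathrm{Aut}(\mathbb{H})\subseteq G(\mathbb{H})$; combined with the subgroup statement already quoted, this yields equality. Since every automorphism is in particular a proper holomorphic self-map, it suffices to prove the sharper assertion that \emph{every} proper holomorphic self-map $F$ of $\mathbb{H}$ belongs to $G(\mathbb{H})$, which in passing shows that $F$ is an automorphism. Throughout I would work with the Hartogs representation $\mathbb{H}=\{(a,x)\in\mathbb{C}\times\mathbb{E}:|a|^{2}<e^{-u(x)}\}$, the projection $\pi\colon\mathbb{H}\to\mathbb{E}$, $(a,x)\mapsto x$, the vertical disc fibres $\pi^{-1}(x)$, and the rotation automorphisms $\sigma_{\theta}(a,x)=(e^{i\theta}a,x)$. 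A structural fact to extract early is that $u$ is \emph{not} pluriharmonic on $\mathbb{E}$ (equivalently, $\mathbb{H}$ is not biholomorphic to the product $\mathbb{D}\times\mathbb{E}$), which follows from pseudoconvexity, indeed linear convexity, of $\mathbb{H}$ together with its non-product boundary structure.

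The first step is the \emph{descent}: one shows that $F$ respects the fibration, i.e.\ that there is a holomorphic map $\Phi\colon\mathbb{E}\to\mathbb{E}$ with $\pi\circ F=\Phi\circ\pi$, and that $\Phi$ is proper. Concretely, the vertical fibres are mapped into vertical fibres, so that the base component of $F$ is independent of $a$; the tools are the circle action $\sigma_{\theta}$, the boundedness of $\mathbb{E}$, and the relative potential $v(a,x)=\log|a|^{2}+u(x)$, which is plurisubharmonic on $\mathbb{H}$, tends to $0^{-}$ along the smooth part $\{|a|^{2}=e^{-u(x)}\}$ of the boundary, and has polar set exactly the centre $\{a=0\}$. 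Properness of $F$ forces $v\circ F$ to exhibit the same boundary behaviour, which simultaneously shows that $\{a=0\}$ is preserved and that the induced base map is well defined and proper on $\mathbb{E}$.

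The second step invokes \emph{rigidity of the tetrablock}: every proper holomorphic self-map of $\mathbb{E}$ is an automorphism, so $\Phi\in\mathrm{Aut}(\mathbb{E})$ and falls into one of Young's two families $\tau_{\nu,\chi}$, $\tau_{\nu,\chi,F}$. Since $\{a=0\}$ is preserved and each fibre is a disc centred at $0$, the first component of $F$ restricts on each fibre to a proper self-map of the disc fixing $0$, hence is a monomial $F_{0}(a,x)=\lambda(x)\,a^{k}$ with $\lambda$ holomorphic and nowhere zero. Matching the boundary $\{|a|^{2}=e^{-u(x)}\}$ with its image gives the transformation rule
\[
u(\Phi(x))=k\,u(x)-2\log|\lambda(x)|.
\]
On the other hand, $G(\mathbb{H})$ already contains an automorphism lying over $\Phi$, so there is a holomorphic nowhere-zero $\lambda_{0}$ with $u(\Phi(x))=u(x)-2\log|\lambda_{0}(x)|$. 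Subtracting, $(k-1)u(x)=2\log|\lambda(x)/\lambda_{0}(x)|$ is pluriharmonic; since $u$ is not pluriharmonic this forces $k=1$, so $F$ has degree one and is an automorphism. It then follows that $|\lambda/\lambda_{0}|\equiv 1$, and as $\mathbb{E}$ is simply connected a nowhere-zero holomorphic function of constant modulus is a unimodular constant. This constant is precisely the parameter $\omega\in\mathbb{T}$ appearing in $T_{\nu,\chi,\omega}$, $T_{\nu,\chi,F,\omega}$, whence $F$ coincides with one of these explicit maps and $F\in G(\mathbb{H})$.

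I expect the main obstacle to be the descent step. The boundary of $\mathbb{H}$ is not $\mathcal{C}^{1}$ and that of $\mathbb{E}$ is not smooth either, so one cannot appeal to boundary regularity of proper maps (Bell--Ligocka type arguments) directly; the fibration must instead be recovered intrinsically from the circle symmetry and the potential $v$, controlling the cluster behaviour of $F$ along the degenerate boundary over $\partial\mathbb{E}$ where the fibres collapse. A secondary but essential point is verifying that $u$ is not pluriharmonic and establishing the precise transformation law $u\circ\Phi-u=-2\log|\lambda_{0}|$ for the $G(\mathbb{H})$-lifts, since these two facts together underpin the degree-one conclusion that upgrades properness to the explicit automorphism form.
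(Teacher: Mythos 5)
Your high-level skeleton (descend to a self-map of the tetrablock, invoke Kosi\'nski's tetrablock rigidity, then use non-pluriharmonicity of $u$ to pin down the fibre maps) parallels the paper, but two steps on which everything hinges are not actually established, and one of them is false as stated. First, the descent. You assert that the circle action and the potential $v(a,x)=\log|a|^{2}+u(x)$ force $F$ to map vertical fibres into vertical fibres, because ``properness forces $v\circ F$ to exhibit the same boundary behaviour.'' This does not follow: properness only says $F$ sends $\partial\mathbb{H}$ to $\partial\mathbb{H}$, and $\partial\mathbb{H}$ contains, besides the Hartogs part $\{|a|^{2}=e^{-u(x)}\}$ where $v=0$, a large degenerate part lying over $\partial\mathbb{E}$ on which $v$ is bounded away from $0$; a priori nothing prevents $F$ from sending interior sequences approaching one stratum to the other, so $v\circ F$ need not tend to $0$ at $\partial_1\mathbb{H}$, and even granting that $\{a=0\}$ is preserved, that alone does not make the base components independent of $a$. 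The paper needs a genuinely different and heavier chain for exactly this point: holomorphic extension of $f$ past $\overline{\mathbb{H}}$ (Kosi\'nski's extension lemma for quasi-balanced domains), a stratification $\partial\mathbb{H}=\partial_1\mathbb{H}\cup\partial_2\mathbb{H}\cup\partial_3\mathbb{H}$ together with an analytic-disc dichotomy ($\partial_1\mathbb{H}$ contains no $2$-dimensional analytic discs while $\partial_2\mathbb{H}$ is foliated by them), which forces $f(\partial_2\mathbb{H})\subset\mathbb{C}\times\partial\mathbb{E}$; this makes $x\mapsto(f_2,f_3,f_4)(0,x)$ a proper self-map of $\mathbb{E}$, hence an automorphism by Kosi\'nski, and only \emph{then}, by the maximum modulus principle on the fibres over points of $b\mathbb{E}$ and the fact that $b\mathbb{E}$ is the distinguished boundary, do the components $f_2,f_3,f_4$ become independent of $a$. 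Your sketch has no substitute for any of these ingredients; you correctly flag the descent as the main obstacle, but flagging it is not closing it.

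Second, the fibre-map analysis contains an outright error: a proper holomorphic self-map of a disc fixing the centre is a finite Blaschke product vanishing at $0$, not a monomial; for instance $a\mapsto a\,\dfrac{a-c}{1-\bar{c}a}$ with $0<|c|<1$ is proper, fixes $0$, and is not of the form $\lambda a^{k}$. Consequently your transformation rule $u(\Phi(x))=k\,u(x)-2\log|\lambda(x)|$, and with it the pluriharmonicity argument that yields $k=1$, does not follow from what precedes it. The paper avoids this trap in a different order: after reducing to $G=T\circ f=(F_1(a,x),x)$, it restricts $G$ to the slice $\{(a,\lambda,\lambda,\lambda^{2})\}\cong\mathbb{B}^{2}$ and applies Alexander's theorem to get $F_1(a,\lambda,\lambda,\lambda^{2})=\eta a$; this gives $J_G\equiv 1$ on $\{x=0\}$, hence $G$ is fibrewise a disc \emph{automorphism} (degree one, M\"obius) near the origin, and only then does non-pluriharmonicity of $u$ (verified by the direct computation $\partial^{2}u/\partial x_1\partial\bar{x}_1(0,0,0)=1$, not by the soft appeal to linear convexity you suggest) force the M\"obius centre $F_1^{-1}(0,x)$ to vanish identically, giving $F_1(a,x)=e^{i\theta}a$. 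Your instinct that non-pluriharmonicity of $u$ is the engine of the final rigidity is exactly right, but it can only be applied after degree one is secured by some independent argument such as the Alexander-theorem slice trick, which your proposal lacks. (A small additional point: a zero-free holomorphic function of constant modulus on a domain is constant by the open mapping theorem; simple connectivity of $\mathbb{E}$ is not needed.)
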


Our resolution of this conjecture is achieved through a complete characterization of proper holomorphic self-mappings of $\mathbb{H}$. Specifically, we prove:

\begin{theorem}\label{Main}
Every proper holomorphic self-mapping of $\mathbb{H}$ is an automorphism, and moreover, coincides with one of the explicit forms given by \eqref{form1} or \eqref{form2}. This characterization directly establishes $G(\mathbb{H}) = \mathrm{Aut}(\mathbb{H})$.
\end{theorem}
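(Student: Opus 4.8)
The plan is to exploit the Hartogs presentation $\mathbb{H}=\{(a,x)\in\mathbb{C}\times\mathbb{E}:|a|^{2}<e^{-u(x)}\}$, the projection $\pi(a,x)=x$, and the circle action $\rho_\omega(a,x)=(\omega a,x)$, $\omega\in\mathbb{T}$, which lies in $\operatorname{Aut}(\mathbb{H})$. Writing a proper holomorphic self-map as $f=(g,h)\colon\mathbb{H}\to\mathbb{H}$, the first and central goal is to prove that $f$ respects the fibration and is monomial in the fibre variable, i.e.
\[
f(a,x)=\bigl(a^{m}\psi(x),\,\Phi(x)\bigr),\qquad m\ge 1,
\]
for some holomorphic $\psi\colon\mathbb{E}\to\mathbb{C}$, $\psi\not\equiv0$, and some \emph{proper} holomorphic self-map $\Phi\colon\mathbb{E}\to\mathbb{E}$ of the tetrablock. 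Once this normal form is secured, the remaining steps are an algebraic finish: the rigidity of the tetrablock (every proper holomorphic self-map of $\mathbb{E}$ is an automorphism) upgrades $\Phi$ to $\Phi\in\operatorname{Aut}(\mathbb{E})$, after which one forces $m=1$ and identifies $f$ with one of the listed automorphisms.

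The reduction to this normal form is the heart of the argument and the main obstacle. The Hartogs part of the boundary, $\{|a|^{2}=e^{-u(x)}\}$, is smooth because $u$ is smooth on $\mathbb{E}$, and there $\mathbb{H}$ is pseudoconvex and strongly convex in the fibre direction; the non-smooth part lies over $\partial\mathbb{E}$. Over relatively compact subsets of $\mathbb{E}$ one can analyse $f$ by the standard machinery for proper maps of pseudoconvex Hartogs domains (boundary regularity in the spirit of Bell) combined with invariance under $\rho_\omega$: the circle symmetry should force $f$ to intertwine the fibre action, $h(\omega a,x)=h(a,x)$ and $g(\omega a,x)=\omega^{m}g(a,x)$, whence $h=\Phi$ depends on $x$ alone and $g=a^{m}\psi(x)$; that $\Phi$ is then a proper self-map of $\mathbb{E}$ follows from the boundary correspondence since $f$ is finite and surjective. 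The delicate point—where the non-smoothness over $\partial\mathbb{E}$ must be controlled—is to exclude any $a$-dependence of $h$; here the strict plurisubharmonicity of $u$ is essential, because on each fibre disc $u\circ h$ is subharmonic and is harmonic precisely when $h$ is constant there.

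With the normal form in hand, invoke the tetrablock rigidity to get $\Phi\in\operatorname{Aut}(\mathbb{E})$; by Young's classification the two families $\tau_{\nu,\chi}$ and $\tau_{\nu,\chi,F}$ exhaust $\operatorname{Aut}(\mathbb{E})$, so $\Phi$ lifts to $G(\mathbb{H})$ and there is a nonvanishing holomorphic $c_\Phi$ with $u\circ\Phi=u-\log|c_\Phi|^{2}$. Matching $f$ to the Hartogs boundary gives $|a^{m}\psi(x)|^{2}=e^{-u(\Phi(x))}$ when $|a|^{2}=e^{-u(x)}$, that is
\[
\log|\psi(x)|^{2}=m\,u(x)-u(\Phi(x)),\qquad x\in\mathbb{E}.
\]
Applying $\pr\ddbar$ and using $\pr\ddbar(u\circ\Phi)=\pr\ddbar u$ yields $(m-1)\,\pr\ddbar u=0$; since $u$ is not pluriharmonic we obtain $m=1$. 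Hence $g(a,x)=a\,\psi(x)$ with $|\psi|^{2}=e^{\,u-u\circ\Phi}=|c_\Phi|^{2}$, so $\psi/c_\Phi$ is a unimodular constant $\omega\in\mathbb{T}$ and $\psi=\omega\,c_\Phi$. Therefore $f=\rho_\omega$ composed with the lift of $\Phi$, i.e. exactly one of the maps $T_{\nu,\chi,\omega}$ or $T_{\nu,\chi,F,\omega}$ of \eqref{form1}--\eqref{form2}. In particular every proper holomorphic self-map of $\mathbb{H}$ is an automorphism, and $G(\mathbb{H})=\operatorname{Aut}(\mathbb{H})$.
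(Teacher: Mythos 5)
Your finishing steps (tetrablock rigidity for $\Phi$, the identity $u\circ\Phi=u-\log|c_\Phi|^2$, the $\pr\ddbar$ computation forcing $m=1$, and the conclusion $\psi=\omega c_\Phi$) are sound \emph{once the normal form is granted}, but the normal form itself --- which you correctly identify as the heart of the argument --- is not proved, and the mechanism you propose for it is false. Invariance of $\mathbb{H}$ under the circle action $\rho_\omega(a,x)=(\omega a,x)$ does \emph{not} force a proper self-map to intertwine that action: the unit ball $\mathbb{B}^2=\{|a|^2<1-|\lambda|^2\}$ is precisely a Hartogs domain of the form $\{|a|^2<e^{-u(\lambda)}\}$ with $u$ smooth and strictly plurisubharmonic, invariant under the same circle action, and yet its automorphisms (e.g.\ the coordinate swap $(a,\lambda)\mapsto(\lambda,a)$, or any M\"obius map mixing the variables) do not preserve the fibration, let alone act monomially in the fibre variable. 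So no general principle about pseudoconvex Hartogs domains with circle symmetry can yield $h(\omega a,x)=h(a,x)$ and $g(\omega a,x)=\omega^m g(a,x)$; fibration-preservation must come from the specific rigidity of the base $\mathbb{E}$, not from the symmetry of the fibres. Likewise your fallback remark --- that $u\circ h$ is subharmonic on each fibre disc and ``harmonic precisely when $h$ is constant'' --- is not an argument: nothing forces $u\circ h$ to be harmonic on fibres, and you give no mechanism that would.

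For contrast, the paper obtains exactly this fibration-preservation by boundary analysis rather than symmetry: $f$ extends holomorphically past $\ol{\mathbb{H}}$ (Kosi\'nski's extension lemma for quasi-circular targets), the boundary is stratified into $\partial_1\mathbb{H}$ (the Hartogs part over $\mathbb{E}$, which contains no $2$-dimensional analytic discs) and $\partial_2\mathbb{H}$ (over $\partial\mathbb{E}$, which is foliated by $2$-dimensional discs), so properness forces $f(\partial_2\mathbb{H})\subset\mathbb{C}\times\partial\mathbb{E}$. From this the restriction $x\mapsto(f_2(0,x),f_3(0,x),f_4(0,x))$ is a proper self-map of $\mathbb{E}$, hence an automorphism by Kosi\'nski's tetrablock theorem; then a maximum-modulus argument on the fibres over $b\mathbb{E}$, propagated by the Shilov-boundary property and the identity theorem, shows $f_2,f_3,f_4$ are independent of $a$ \emph{everywhere}. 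Note also that even after fibration-preservation is known, the first component is not a priori monomial $a^m\psi(x)$; the paper pins it down by Alexander's theorem on the ball slice $\{(a,\lambda,\lambda,\lambda^2)\}$ together with a fibrewise M\"obius argument, using (as you do at the end) the non-pluriharmonicity of $u$ to exclude $a$-dependence of the M\"obius data. To repair your proposal you would need to replace the intertwining claim by an argument of this boundary-geometric type; as written, the proof does not go through.
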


\section{Preliminaries}
\subsection{Tetrablock}
In this section, we review the fundamental properties of the tetrablock $\mathbb{E}$ (see \cite{Young} for details). This domain admits several equivalent characterizations.
\begin{lemma}[Theorem 2.2 in \cite{Abouhajar-White-Young}]
For $x=(x_1,x_2,x_3)\in\mathbb{C}^3$, the following are equivalent:
\begin{enumerate}
    \item $x\in\mathbb{E}$;
    \item $|x_1-\bar{x}_2x_3|+|x_1x_2-x_3|<1-|x_2|^2$;
    \item $\sup_{z\in\mathbb{D}}|\Psi(z,x)|<1$ and if $x_1x_2=x_3$ then $|x_2|<1$, where
          \[
          \Psi(z,x_1,x_2,x_3)=\frac{x_3z-x_1}{x_2z-1};
          \]
    \item There exists a symmetric matrix $A\in\mathbb{C}^{2\times2}$ with $\|A\|<1$ such that $\pi(A)=x$;
    \item There exist $\beta_1,\beta_2\in\mathbb{C}$ with $|\beta_1|+|\beta_2|<1$ satisfying
          \[
          x_1=\beta_1+\bar{\beta}_2x_3,\quad x_2=\beta_2+\bar{\beta}_1x_3.
          \]
\end{enumerate}
\end{lemma}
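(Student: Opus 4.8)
The statement is the chain of equivalences $(1)\Leftrightarrow(2)\Leftrightarrow(3)\Leftrightarrow(4)\Leftrightarrow(5)$ describing membership in the tetrablock $\mathbb{E}$. The plan is to organize everything around the $2\times2$ symmetric realization and to use a single determinant identity as the hinge connecting the defining condition (1) to the operator-theoretic condition (4). For $x=(x_1,x_2,x_3)$ set $A=\begin{pmatrix} x_1 & \delta \\ \delta & x_2\end{pmatrix}$ with $\delta=\sqrt{x_1x_2-x_3}$, a symmetric matrix with $\det A=x_3$, so that $\pi(A)=x$. A direct computation with $D_{z,w}=\operatorname{diag}(z,w)$ gives the key identity
\[
\det\bigl(I-A\,D_{z,w}\bigr)=(1-x_1 z)(1-x_2 w)-(x_1x_2-x_3)zw=1-x_1z-x_2w+x_3zw,
\]
which shows that the defining expression of $\mathbb{E}$ in (1) is nothing but $\det(I-A D_{z,w})$. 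This single observation is what makes the whole cycle manageable.

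With the identity in hand I would prove $(1)\Leftrightarrow(4)$. If $\|A\|<1$, then for any contraction $D$ the product $AD$ has spectral radius at most $\|A\|\,\|D\|<1$, hence $\det(I-AD)\neq0$; specializing to $D=D_{z,w}$ with $|z|,|w|\le1$ yields (1). The converse is the substantive direction: from the polynomial nonvanishing hypothesis one must manufacture a genuine symmetric contraction realizing $x$. I would argue this by an extremal/realization argument on the pencil $I-A D_{z,w}$, showing that if $\det(I-AD_{z,w})$ never vanishes for $|z|,|w|\le1$ then the symmetric realization above must satisfy $\|A\|<1$. This is the point where operator theory genuinely enters, and it is where I expect the real work to lie.

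The two algebraic conditions then fall out by specializing (4). Expanding $\|A\|<1$, i.e.\ positive definiteness of $I-A^{*}A$, in terms of the entries of $A$ and substituting $\delta^{2}=x_1x_2-x_3$ should collapse, after collecting terms, to exactly $|x_1-\bar{x}_2x_3|+|x_1x_2-x_3|<1-|x_2|^{2}$, giving $(4)\Leftrightarrow(2)$. For $(4)\Leftrightarrow(5)$ I would parametrize the symmetric contractions: a defect-space factorization writes $A$ through two parameters $\beta_1,\beta_2$ with $|\beta_1|+|\beta_2|<1$, and matching entries produces the relations $x_1=\beta_1+\bar{\beta}_2x_3$ and $x_2=\beta_2+\bar{\beta}_1x_3$; conversely, solving this linear system (whose determinant $1-|x_3|^2$ is controlled along the way) reconstructs an admissible pair $\beta_1,\beta_2$ and hence the contraction. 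Separately, for $(1)\Leftrightarrow(3)$ I would fix $w$ and read $1-x_1z-x_2w+x_3zw$ as affine in $z$: its nonvanishing on $\overline{\mathbb{D}}$ is equivalent to $\left|\frac{1-x_2w}{x_1-x_3w}\right|>1$, that is $|\Psi(w,x)|<1$, and letting $w$ range over $\mathbb{D}$ together with the maximum principle yields the supremum condition.

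The main obstacle, as noted, is the hard half of $(1)\Leftrightarrow(4)$: extracting a symmetric contraction from a purely polynomial nonvanishing statement. Running a close second is the uniform treatment of the degenerate locus $x_1x_2=x_3$, where $\delta=0$ so that $A$ becomes the diagonal matrix $\operatorname{diag}(x_1,x_2)$ and simultaneously $\Psi$ collapses to the constant $x_1$; here the supremum condition alone loses information, and it is precisely the auxiliary clause ``if $x_1x_2=x_3$ then $|x_2|<1$'' in (3) that restores equivalence, matching $\|\operatorname{diag}(x_1,x_2)\|<1$. I would isolate and verify this degenerate case by hand and check that it is consistent across all five conditions.
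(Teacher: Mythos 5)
Note first that the paper itself gives no proof of this lemma: it is quoted verbatim from Theorem 2.2 of Abouhajar--White--Young, so your sketch has to stand on its own, and at its hinge it does not. Your determinant identity $\det(I-AD_{z,w})=1-x_1z-x_2w+x_3zw$ and the spectral-radius argument for $(4)\Rightarrow(1)$ are correct, but the direction $(1)\Rightarrow(4)$ -- which carries your entire cycle -- is left as a placeholder (``an extremal/realization argument \dots where I expect the real work to lie''). That is a restatement of the theorem, not a proof of it. Moreover the operator-theoretic detour is avoidable: your own affine-in-$z$ reduction already gives $(1)\Leftrightarrow(3)$; from $(3)$ one gets $(2)$ by maximizing the M\"obius ratio $|x_1-x_3w|/|1-x_2w|$ over $|w|=1$, a one-variable computation; and $(2)\Rightarrow(4)$ is then explicit $2\times2$ algebra, since for your symmetric $A$ one has $\|A\|<1$ if and only if $\operatorname{tr}(I-A^*A)>0$ and $\det(I-A^*A)=1-|x_1|^2-|x_2|^2-2|x_1x_2-x_3|+|x_3|^2>0$, and matching this pair of inequalities against $(2)$ is the fiddly step your phrase ``should collapse, after collecting terms'' merely asserts. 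This elementary chain $(1)\Rightarrow(3)\Rightarrow(2)\Rightarrow(4)\Rightarrow(1)$ is essentially how the cited source proceeds; nothing in it requires manufacturing a contraction from the nonvanishing hypothesis by abstract realization theory.

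The second genuine gap is in your treatment of $(5)$. As quoted, condition $(5)$ does not control $|x_3|$, and your parenthetical claim that the determinant $1-|x_3|^2$ of the linear system ``is controlled along the way'' is exactly where the argument breaks: with $\beta_1=\beta_2=0$ the point $x=(0,0,2)$ satisfies $(5)$ verbatim, yet $(0,0,2)\notin\mathbb{E}$ (take $z=1$, $w=-1/2$ in the defining function), and the associated symmetric matrix has norm $\sqrt{2}$. The original statement in Abouhajar--White--Young includes the hypothesis $|x_3|<1$ in this item, and with it the argument is clean: the relations give $x_1-\bar{x}_2x_3=\beta_1(1-|x_3|^2)$ and $x_2-\bar{x}_1x_3=\beta_2(1-|x_3|^2)$, so $|\beta_1|+|\beta_2|<1$ is literally the inequality $|x_1-\bar{x}_2x_3|+|x_2-\bar{x}_1x_3|<1-|x_3|^2$, which then joins the chain of algebraic characterizations. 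So you must either repair the statement as in the source or derive $|x_3|<1$ from $(5)$, which the counterexample shows is impossible; a ``defect-space factorization'' cannot rescue this. By contrast, your isolation of the degenerate locus $x_1x_2=x_3$ in $(3)$, where $\Psi$ collapses to the constant $x_1$ and the auxiliary clause $|x_2|<1$ restores the equivalence, is exactly right.
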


Importantly, the automorphism group of $\mathbb{E}$ acts on the $\beta$-foliation. This connection reveals fundamental aspects of the domain's geometry.

\begin{theorem}[Theorem 5.2 in \cite{Young}]\label{Young}
Let $x=(x_1,x_2,x_3) \in \mathbb{E}$. The orbit of $x$ under $\text{Aut}(\mathbb{E})$ contains a unique point of the form $(0,0,r)$ with $r \in [0,1)$. If $x = (\beta_1 + \bar{\beta}_2 \lambda, \beta_2 + \bar{\beta}_1 \lambda, \lambda)$ then $r$ is given by
$$r = \left| \frac{\lambda - \alpha \bar{\theta}}{\bar{\alpha} \theta \lambda - 1} \right|,$$
where $\alpha$ and $\theta$ are explicitly determined by $\beta_1$ and $\beta_2$.
\end{theorem}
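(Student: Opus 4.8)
The plan is to work with the two descriptions of $\mathbb{E}$ recorded above. Writing $\lambda=x_3$, the linear system of item~(5) of the Lemma has nondegenerate coefficient determinant $1-|\lambda|^2>0$ (since $|x_3|<1$ on $\mathbb{E}$, as one sees from the matricial model $x_3=\det A$ with $\|A\|<1$), so the foliation parameters are the uniquely determined functions $\beta_1=(x_1-\bar x_2\lambda)/(1-|\lambda|^2)$ and $\beta_2=(x_2-\bar x_1\lambda)/(1-|\lambda|^2)$. In these coordinates the candidate normal form $(0,0,r)$ sits exactly at $\beta_1=\beta_2=0$, $\lambda=r$, and it lies in $\mathbb{E}$ because item~(2) collapses to $r<1$ when $x_1=x_2=0$. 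I would then split the statement into three tasks: every orbit meets the axis $\{(0,0,t):t\in\mathbb{D}\}$ (existence), distinct $r\in[0,1)$ lie in distinct orbits (uniqueness), and the resulting invariant equals the displayed expression (formula).

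For existence I would invoke Young's classification of $\operatorname{Aut}(\mathbb{E})$, which supplies the explicit families $\tau_{\nu,\chi}$ and $\tau_{\nu,\chi,F}$ indexed by a pair $\nu,\chi\in\operatorname{Aut}(\mathbb{D})$ together with the flip $F$ (the coordinate swap $x_1\leftrightarrow x_2$). The decisive step is to compute the induced action of $\tau_{\nu,\chi}$ on the triple $(\beta_1,\beta_2,\lambda)$. I expect this action to separate the two parameters, with $\nu,\chi$ controlling $\beta_1,\beta_2$ while jointly moving $\lambda$, so that a suitable choice of the two disc automorphisms drives $\beta_1$ and $\beta_2$ simultaneously to $0$, carrying $x$ to an axis point $(0,0,\lambda')$. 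A concluding $\tau_{\nu,\chi}$ acting as a rotation $\lambda'\mapsto\omega\lambda'$ with $\omega\in\mathbb{T}$ then yields $(0,0,r)$ with $r=|\lambda'|\in[0,1)$.

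For uniqueness I would promote the displayed formula to an invariant. Define $r(x)$ by the right-hand side, with $\alpha,\theta$ the quantities extracted from $\beta_1,\beta_2$ in the normalization above, and put $c=\alpha\bar\theta$, so that $r(x)=\bigl|(\lambda-c)/(\bar c\lambda-1)\bigr|$ is the modulus of a Blaschke factor in $\lambda$ with parameter $c$ (here $|c|<1$ should follow from $|\beta_1|+|\beta_2|<1$). In particular $r(0,0,t)=|t|$, since $\beta_1=\beta_2=0$ forces $c=0$. The heart of this part is to verify $r(\phi(x))=r(x)$ for each generator $\phi$ of $\operatorname{Aut}(\mathbb{E})$, using the action formulas of the previous paragraph: flip-invariance reflects the symmetry of $c$ under $\beta_1\leftrightarrow\beta_2$, while invariance under $\tau_{\nu,\chi}$ reduces to the assertion that the parameter $c$ and the point $\lambda$ transform by compatible Möbius maps, leaving $|B_c(\lambda)|$ unchanged. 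Granting invariance, if $(0,0,r)$ and $(0,0,r')$ shared an orbit then $r=r(0,0,r)=r(0,0,r')=r'$; combined with existence this shows that the orbit of any $x$ contains the single axis point $(0,0,r(x))$, which is exactly the assertion.

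The main obstacle is the explicit determination of how $\tau_{\nu,\chi}$ and $\tau_{\nu,\chi,F}$ act on $(\beta_1,\beta_2,\lambda)$: both the normalization and the invariance computation rest on these formulas, and the genuine work is to read off the correct $\alpha,\theta$ from $\beta_1,\beta_2$ so that the composite action on $\lambda$ is precisely the Blaschke factor $\lambda\mapsto(\lambda-\alpha\bar\theta)/(\bar\alpha\theta\lambda-1)$. Once that action is in hand, checking $(0,0,r)\in\mathbb{E}$, the reduction to the axis, and the invariance of $r(x)$ are all routine Blaschke-modulus manipulations.
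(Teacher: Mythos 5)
First, a bookkeeping point: the paper does not prove this statement at all --- it is quoted verbatim as Theorem 5.2 of \cite{Young}, so the only proof to compare against is Young's original one. Your outline does resemble that argument in broad strokes (normalize with the explicit automorphisms $\tau_{\nu,\chi}$, $\tau_{\nu,\chi,F}$ acting in the $(\beta_1,\beta_2,\lambda)$ coordinates of item (5) of the parametrization lemma, and read off $r$ as the modulus of the induced Blaschke factor in $\lambda$), and your preliminary observations are sound: $\beta_1,\beta_2$ are well defined because $|x_3|<1$ on $\mathbb{E}$, and $(0,0,r)\in\mathbb{E}$ exactly when $r<1$.

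The genuine gap is in your uniqueness step, and it is a logical one, not a computational one. You propose to show $r(\phi(x))=r(x)$ by checking invariance \emph{only on the generators} $\tau_{\nu,\chi}$, $\tau_{\nu,\chi,F}$; but the conclusion you need is invariance under \emph{every} element of $\mathrm{Aut}(\mathbb{E})$, and your license for reducing to generators is the classification $\mathrm{Aut}(\mathbb{E})=\{\tau_{\nu,\chi},\tau_{\nu,\chi,F}\}$. That classification is the main theorem of Young's paper, and its proof passes through precisely this orbit theorem (one reduces an arbitrary automorphism to one fixing a normal-form point $(0,0,r)$ before applying rigidity arguments). So, as written, your uniqueness argument is circular: you would be using the classification to prove a theorem that the classification's proof depends on. To repair it you must either verify that in the source the classification is established logically prior to Theorem 5.2 (it is not), or replace the generator check with an a priori $\mathrm{Aut}(\mathbb{E})$-invariant --- for instance an invariant built from the Carath\'eodory distance, or the preservation of the triangular set $\{x_1x_2=x_3\}$, or a Cartan/Kaup-type argument exploiting that $\mathbb{E}$ is a bounded $(1,1,2)$-quasi-balanced domain. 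Separately, even granting the strategy, your text defers every decisive computation: the action of $\tau_{\nu,\chi}$ on $(\beta_1,\beta_2,\lambda)$, the solvability of the two normalization equations driving $\beta_1,\beta_2$ to $0$ (which is exactly where $\alpha$ and $\theta$ come from), and the bound $|\alpha\bar{\theta}|<1$ are all flagged with ``I expect'' or ``should follow.'' Those computations \emph{are} the content of the theorem; without them neither the existence of the normal form nor the displayed formula for $r$ has been established, so the proposal is a plausible program rather than a proof.
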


We establish the following analytic characterizations of the topological boundary $\partial\mathbb{E}$.

\begin{lemma}[Corollary 4.2.7 in \cite{Abouhajar}]\label{Le2.3}
For $(x_1,x_2,x_3)\in \mathbb{C}^3$, the following are equivalent:
\begin{enumerate}
    \item $(x_{1},x_{2},x_{3})\in\partial\mathbb{E}$;
    \item $|x_{1}|\leq1$ and $|x_{2}|^{2}+|x_{1}-\overline{x}_{2}x_{3}|+|x_{1}x_{2}-x_{3}|=1$;
    \item $|x_{2}|\leq1$ and $|x_{1}|^{2}+|x_{2}-\overline{x}_{1}x_{3}|+|x_{1}x_{2}-x_{3}|=1$;
    \item $|x_{1}|\leq1$, $|x_{2}|\leq1$, $|x_{3}|\leq1$ and $1-|x_{1}|^{2}-|x_{2}|^{2}+|x_{3}|^2-2|x_1x_2-x_3|=0$;
    \item $|x_{1}|\leq1$, $|x_{2}|\leq1$ and $|x_{1}-\overline{x}_{2}x_{3}|+|x_{2}-\overline{x}_{1}x_{3}|=1-|x_{3}|^{2}$.
\end{enumerate}
\end{lemma}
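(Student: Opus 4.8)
The plan is to obtain all five descriptions from the characterization of the \emph{open} tetrablock in the preceding lemma (Theorem~2.2 of Abouhajar--White--Young) by passing to the closure and extracting the equality locus. Write $P=|x_1-\overline{x}_2x_3|$, $Q=|x_2-\overline{x}_1x_3|$ and $R=|x_1x_2-x_3|$, so that condition~(2) of that lemma reads $\mathbb{E}=\{P+R<1-|x_2|^2\}$, while its image under the involution $(x_1,x_2,x_3)\mapsto(x_2,x_1,x_3)$ (which preserves $\mathbb{E}$) reads $\{Q+R<1-|x_1|^2\}$. Since $\mathbb{E}$ is open, $\partial\mathbb{E}=\overline{\mathbb{E}}\setminus\mathbb{E}$, so a boundary point is precisely a point of $\overline{\mathbb{E}}$ at which the strict defining inequality degenerates to an equality. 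Granting that $\overline{\mathbb{E}}=\{P+R\le 1-|x_2|^2\}\cap\{|x_1|\le1\}$ (justified below), this yields condition~(2) verbatim, and the involution yields condition~(3); it then remains to convert the equality $P+R=1-|x_2|^2$ into the forms~(4) and~(5).

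The algebraic engine is the pointwise identity
\[
|x_1-\overline{x}_2x_3|^2-|x_1x_2-x_3|^2=(1-|x_2|^2)(|x_1|^2-|x_3|^2),
\]
proved by expanding both squared moduli and cancelling the common cross term $2\operatorname{Re}(x_1x_2\overline{x}_3)$, together with its involution-image $Q^2-R^2=(1-|x_1|^2)(|x_2|^2-|x_3|^2)$. On the equality locus $P+R=1-|x_2|^2$ with $|x_2|<1$, I factor $P^2-R^2=(P-R)(P+R)$ and divide by $P+R=1-|x_2|^2\neq0$ to get $P-R=|x_1|^2-|x_3|^2$; solving the resulting linear system in $P,R$ gives $2R=1-|x_1|^2-|x_2|^2+|x_3|^2$, which is exactly condition~(4), and $2P=1+|x_1|^2-|x_2|^2-|x_3|^2$. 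The involution yields $2Q=1+|x_2|^2-|x_1|^2-|x_3|^2$, and adding the last two displays produces $P+Q=1-|x_3|^2$, i.e.\ condition~(5). Reversing these steps recovers $P+R=1-|x_2|^2$ from either~(4) or~(5), so conditions~(2)--(5) are mutually equivalent on the regime $|x_2|<1$; the modulus bounds $|x_1|,|x_2|,|x_3|\le1$ listed in each item hold automatically there, since by item~(4) of the preceding lemma a point of $\overline{\mathbb{E}}$ is the image of a symmetric $2\times2$ matrix of operator norm at most $1$, whose diagonal entries and determinant have modulus at most $1$.

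The principal obstacle is the degenerate stratum $|x_2|=1$ (and symmetrically $|x_1|=1$), where $1-|x_2|^2=0$ and the division above is illegitimate. There the equality $P+R=0$ forces $P=R=0$, i.e.\ $x_3=x_1x_2$, and this holds for the \emph{entire} family $\{(x_1,x_2,x_1x_2):|x_2|=1\}$ with $x_1$ arbitrary --- including points with $|x_1|>1$ that do not lie in $\overline{\mathbb{E}}$. This is exactly why $|x_1|\le1$ cannot be dropped from~(2): it is the constraint that excises these spurious solutions and makes $\{P+R\le1-|x_2|^2\}\cap\{|x_1|\le1\}$ coincide with $\overline{\mathbb{E}}$. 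I would settle this last point, and thereby the identity $\overline{\mathbb{E}}=\{P+R\le1-|x_2|^2\}\cap\{|x_1|\le1\}$ underlying the whole argument, by a density computation: using the parametrization of item~(5) of the preceding lemma, write $x_1=\beta_1+\overline{\beta}_2x_3$, $x_2=\beta_2+\overline{\beta}_1x_3$, and note that replacing $(\beta_1,\beta_2)$ by $(t\beta_1,t\beta_2)$ with $t\uparrow1$ exhibits any candidate boundary point with $|\beta_1|+|\beta_2|=1$ as a limit of interior points, whereas points with $|x_1|>1$ admit no such representation; the normal form $(0,0,r)$ of Theorem~\ref{Young} furnishes an alternative route through $\operatorname{Aut}(\mathbb{E})$. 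Assembling these pieces identifies $\partial\mathbb{E}$ with the common equality set, establishing (1)~$\Leftrightarrow$~(2)~$\Leftrightarrow\cdots\Leftrightarrow$~(5).
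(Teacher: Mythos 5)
The paper does not prove this lemma at all: it is imported verbatim as Corollary 4.2.7 of Abouhajar's thesis (equivalently, it is part of the closure/boundary characterizations in Abouhajar--White--Young), so there is no in-paper argument to compare yours against. Judged on its own terms, your strategy --- the identity $P^2-R^2=(1-|x_2|^2)(|x_1|^2-|x_3|^2)$ and its swap-image, applied on the equality locus of the open characterization --- is indeed the standard engine behind the cited result, and your forward chain (1)$\Rightarrow$(2)$\Rightarrow$(4), plus (3)$\Rightarrow$(5) via the involution, is correct where the divisions are legitimate. But the proposal has two genuine gaps.

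First, the reverse implications are not obtained by ``reversing these steps.'' Writing $C=1-|x_2|^2$ and $D=|x_1|^2-|x_3|^2$, condition (4) gives $2R=C-D$, and then your identity yields only $P^2=\bigl(\tfrac{C+D}{2}\bigr)^2$, i.e.\ $P=\tfrac{|C+D|}{2}$; similarly, starting from (5) one gets $R=\tfrac{|C-D|}{2}$ and hence $P+R=\max\bigl(1-|x_2|^2,\ |x_1|^2-|x_3|^2\bigr)$. Recovering (2) therefore requires a sign analysis showing the maximum is $1-|x_2|^2$ (and, from (4), that $C+D\ge 0$), which is an extra argument your proposal never makes; the modulus bounds $|x_1|,|x_2|,|x_3|\le 1$ enter here essentially, not just as automatic decorations. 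Likewise, deriving (5) divides by $P+Q=1-|x_3|^2$, so the stratum $|x_3|=1$ --- which is exactly the distinguished boundary $b\mathbb{E}$, squarely inside $\partial\mathbb{E}$, as the paper's Lemma \ref{le2.5} shows --- needs separate treatment, not just the $|x_2|=1$ stratum you discuss.

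Second, the load-bearing claim $\overline{\mathbb{E}}=\{P+R\le 1-|x_2|^2\}\cap\{|x_1|\le 1\}$ is asserted rather than proved, and the density sketch offered for it is both incomplete and circular. The $\beta$-scaling argument controls $|\beta_1|+|\beta_2|=(P+Q)/(1-|x_3|^2)$, i.e.\ the type-(5) quantity relative to $1-|x_3|^2$, not $P+R$ relative to $1-|x_2|^2$; to convert one into the other you would already need the closed-inequality equivalence of (2) and (5), which is essentially what is being proved. Moreover at $|x_3|=1$ the formulas $\beta_i=(x_i-\overline{x}_j x_3)/(1-|x_3|^2)$ are undefined, so the scaling trick cannot exhibit those boundary points as limits of interior points. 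In effect, a complete version of your argument amounts to reproving the closure theorem of Abouhajar--White--Young (Theorem 2.4 there); the clean fix is either to cite that closure characterization outright --- as the paper does with the boundary version --- or to supply the sign analysis and the $|x_2|=1$, $|x_3|=1$ strata in full.
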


Regarding boundary structures, recall that for a domain $\Omega\subset\mathbb{C}^n$, 
its distinguished boundary $b\Omega$ is defined as the smallest closed subset of $\overline{\Omega}$ 
where every function holomorphic on $\Omega$ and continuous on $\overline{\Omega}$ attains its maximum modulus. The distinguished boundary of the tetrablock was fully characterized by Abouhajar-White-Young \cite{Abouhajar-White-Young}.

\begin{lemma}[Theorem 7.1 in \cite{Abouhajar-White-Young}]\label{le2.5}
For $x=(x_1,x_2,x_3)\in\mathbb{C}^3$, the following are equivalent:
\begin{enumerate}
    \item $x_1=\overline{x_2}x_3$, $|x_3|=1$ and $|x_2|\leq 1$;
    \item Either $x_1x_2\neq x_3$ and $\Psi(\cdot,x_1,x_2,x_3)\in\mathrm{Aut}(\mathbb{D})$, or $x_1x_2=x_3$ and $|x_1|=|x_2|=|x_3|=1$;
    \item There exists a $2\times 2$ unitary matrix $U$ such that $x=(u_{11},u_{22},\det U)$;
    \item $x\in b\mathbb{E}$;
    \item $x\in\overline{\mathbb{E}}$ and $|x_3|=1$.
\end{enumerate}
\end{lemma}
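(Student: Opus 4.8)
The plan is to establish the equivalences as a cycle among the algebraic conditions (1), (2), (3), (5), organized around a single transfer-function identity, and then to treat the distinguished-boundary condition (4) separately by identifying $b\mathbb{E}$ with the explicit set
\[
S := \{(\overline{x_2}x_3,\, x_2,\, x_3) : |x_2|\le 1,\ |x_3| = 1\}
\]
cut out by (1). The computational backbone is the observation that, for $A=(a_{ij})\in M_2(\mathbb{C})$, the rational function $\Psi(\cdot,\pi(A))$ is exactly the characteristic (transfer) function of $A$:
\[
\Psi(z,\pi(A)) = a_{11} + z\,a_{12}(1 - z a_{22})^{-1}a_{21} = \frac{a_{11} - z\det A}{1 - z a_{22}},
\]
so that $\Psi(\cdot,x)$ records $x=(a_{11},a_{22},\det A)$ faithfully. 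Throughout I use the description of $\mathbb{E}$ as $\pi$ applied to $2\times2$ symmetric contractions (Lemma 2.1), together with the symmetrization principle that $\pi(A)\in\overline{\mathbb{E}}$ for every contraction $A$.

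For the algebraic cycle I would prove $(5)\Rightarrow(1)\Rightarrow(2)\Rightarrow(3)\Rightarrow(5)$. Since $|x_3|=|\det A|<1$ on $\mathbb{E}$, the hypothesis $|x_3|=1$ in (5) forces $x\in\partial\mathbb{E}$, and then Lemma \ref{Le2.3}(5) gives $|x_1-\overline{x_2}x_3|+|x_2-\overline{x_1}x_3| = 1-|x_3|^2 = 0$; as both summands are nonnegative, $x_1=\overline{x_2}x_3$ with $|x_2|\le1$, which is (1). For $(1)\Rightarrow(2)$, substituting $x_1=\overline{x_2}x_3$ and $|x_3|=1$ yields $\Psi(z,x)=x_3\,\dfrac{z-\overline{x_2}}{x_2 z-1}$, a unimodular multiple of a Blaschke factor: when $|x_2|<1$ (equivalently $x_1x_2\ne x_3$) this lies in $\mathrm{Aut}(\mathbb{D})$, while $|x_2|=1$ collapses $\Psi$ to a constant and forces $x_1x_2=x_3$ and $|x_1|=|x_2|=|x_3|=1$. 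For $(2)\Rightarrow(3)$, matching $\Psi(\cdot,x)$ with the transfer-function formula reduces the construction of a unitary $U$ with $\pi$-coordinates $x$ to the classical realization of a degree-$\le1$ inner function by a $2\times2$ unitary colligation (the constant case being realized by $U=\mathrm{diag}(x_1,x_2)$). Finally $(3)\Rightarrow(5)$ follows because the transfer function of a unitary is inner, $\pi(U)\in\overline{\mathbb{E}}$ by the symmetrization principle, and $|x_3|=|\det U|=1$.

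The equivalence with (4), namely $b\mathbb{E}=S$, is the heart of the statement, and I would prove it by two inclusions. For $b\mathbb{E}\subseteq S$, note that the affine analytic discs
\[
d_{\beta_1,\beta_2}(\lambda) = (\beta_1+\overline{\beta_2}\lambda,\ \beta_2+\overline{\beta_1}\lambda,\ \lambda),\qquad |\beta_1|+|\beta_2|\le1,
\]
foliate $\overline{\mathbb{E}}$ (this is the $\beta$-representation of Lemma 2.1), and a one-line computation using $\overline{\lambda}\lambda=1$ shows $d_{\beta_1,\beta_2}(\mathbb{T})\subseteq S$; hence every $f$ holomorphic on $\mathbb{E}$ and continuous on $\overline{\mathbb{E}}$ attains its maximum modulus on $S$ by applying the maximum principle on each disc, and the minimality of $b\mathbb{E}$ yields $b\mathbb{E}\subseteq S$. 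For $S\subseteq b\mathbb{E}$ I would exhibit extremal functions: at a point $s_0=(\overline{x_2^0}x_3^0,x_2^0,x_3^0)\in S$ with $|x_2^0|=1$ (so all three coordinates are unimodular), the function $x\mapsto\frac13(\overline{x_1^0}x_1+\overline{x_2^0}x_2+\overline{x_3^0}x_3)$ peaks precisely at $s_0$, placing the ``torus'' part of $S$ in $b\mathbb{E}$; for the remaining points with $|x_2^0|<1$ I would use the family $\Upsilon_\omega(x)=\Psi(\omega,x)$, $\omega\in\mathbb{T}$, whose modulus equals $1$ at $s_0$, combined with a product/limiting argument to certify that such points cannot be deleted from any closed boundary.

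The main obstacle is precisely the inclusion $S\subseteq b\mathbb{E}$. The algebraic equivalences collapse to the transfer-function identity and a single Blaschke computation, and $b\mathbb{E}\subseteq S$ is immediate once the foliating discs are available; but the points of $S$ with $|x_2^0|<1$ are not visibly peak points, so showing they are essential for the Shilov boundary is delicate and requires controlling the boundedness of $\Upsilon_\omega$ on $\overline{\mathbb{E}}$ and its boundary poles at $x_2=\overline{\omega}$. Where convenient I would use Theorem \ref{Young} to normalize points by automorphisms of $\mathbb{E}$, which preserve both $S$ and $b\mathbb{E}$, reducing the verification to model configurations.
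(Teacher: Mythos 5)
The paper never proves this lemma: it is imported verbatim as Theorem 7.1 of Abouhajar--White--Young \cite{Abouhajar-White-Young}, so there is no internal proof to compare against, and your proposal has to be measured against the original argument, whose architecture (transfer-function identity for the algebraic equivalences, analytic discs for $b\mathbb{E}\subseteq S$, peak functions for $S\subseteq b\mathbb{E}$) your plan largely reproduces. Your cycle $(5)\Rightarrow(1)\Rightarrow(2)\Rightarrow(3)\Rightarrow(5)$ is sound; the only tacit ingredients are the \emph{closed} analogues of the membership criteria --- e.g.\ that $\pi(A)\in\overline{\mathbb{E}}$ for an arbitrary (non-symmetric) contraction $A$, which needs the AWY symmetrization lemma, since the paper's quoted characterization speaks only of symmetric strict contractions --- and, in $(2)\Rightarrow(3)$, the remark that for $x_1x_2\neq x_3$ the fraction $\Psi(\cdot,x)$ is in lowest terms, so membership in $\mathrm{Aut}(\mathbb{D})$ genuinely forces the form (1), after which an explicit unitary such as $\bigl(\begin{smallmatrix}\overline{x_2}x_3 & x_3\sqrt{1-|x_2|^2}\\ -\sqrt{1-|x_2|^2} & x_2\end{smallmatrix}\bigr)$ does the job. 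The inclusion $b\mathbb{E}\subseteq S$ via the $\beta$-discs is also correct: for $x\in\mathbb{E}$ the disc has interior in $\mathbb{E}$ and boundary in $S$, and density of $\mathbb{E}$ in $\overline{\mathbb{E}}$ handles the rest.

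The genuine gap is exactly where you flag it, and your proposed tool cannot be repaired as stated. For $s_0\in S$ with $|x_2^0|<1$, the functions $\Upsilon_\omega=\Psi(\omega,\cdot)$ fail for two independent reasons. First, $\Upsilon_\omega\notin A(\mathbb{E})$: the denominator $x_2\omega-1$ vanishes at boundary points with $x_2=\overline{\omega}$, and such points do lie in $\overline{\mathbb{E}}$ (e.g.\ the triangular points $|x_1|=|x_2|=1$, $x_3=x_1x_2$), so $\Upsilon_\omega$ is not continuous on $\overline{\mathbb{E}}$ and cannot certify Shilov membership. Second, and fatally, your own computation in $(1)\Rightarrow(2)$ shows $\Psi(\cdot,x)$ is a unimodular multiple of a Blaschke factor at \emph{every} point of $S$ with $|x_2|<1$; hence $|\Upsilon_\omega|\equiv 1$ on essentially all of $S$ for every $\omega\in\mathbb{T}$, so no product or limiting argument built from these functions can separate $s_0$ from the rest of $S$. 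What is needed is an honest peak function, and one exists along the lines of your normalization idea: automorphisms of $\mathbb{E}$ extend to homeomorphisms of $\overline{\mathbb{E}}$ preserving $S$ and $b\mathbb{E}$, and move $s_0$ to $(0,0,\tau)$ with $|\tau|=1$; then the polynomial $F(x)=\tfrac12\bigl(1+\overline{\tau}(x_3-x_1x_2)\bigr)$ peaks precisely there, because the closed form of Lemma \ref{Le2.3}(2) gives $|x_1x_2-x_3|\le 1-|x_2|^2-|x_1-\overline{x}_2x_3|\le 1$ on $\overline{\mathbb{E}}$, and $|F|=1$ forces $|x_1x_2-x_3|=1$, hence $x_1=x_2=0$ and $x_3=\tau$. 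Finally, your $\tfrac13$-linear peak function at triangular points needs the small supplement that equality of modulus only pins $x$ down to the circle $e^{i\theta}s_0$, and membership of that circle in $\overline{\mathbb{E}}$ (via your step $(5)\Rightarrow(1)$, since $x_1=\overline{x_2}x_3$ rules out $\theta\neq 0$) is what reduces it to $s_0$ itself.
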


\subsection{Hexablock}
As mentioned before, the hexablock is a Hartogs domain fibered over the tetrablock $\mathbb{E}$. In the expression of $u(x)$, $z_1(x)$ and $z_2(x)$ can be explicitly given by (see \cite{Hexablock})

\begin{equation*}
z_{1}(x)=\frac{2\overline{\beta}_{1}}{1+|\beta_{1}|^{2}-|\beta_{2}|^{2}+\sqrt{(1+|\beta_{1}|^{2}-|\beta_{2}|^{2})^{2}-4|\beta_{1}|^{2}}}
\end{equation*}
and
\begin{equation*}
    z_{2}(x)=\frac{2\overline{\beta}_2}{1+|\beta_2|^2-|\beta_1|^2+\sqrt{(1+|\beta_2|^2-|\beta_1|^2)^2-4|\beta_2|^2}},
\end{equation*}
where
\begin{equation*}
\beta_1=\frac{x_1-\overline{x}_2x_3}{1-|x_3|^2}\quad and\quad\beta_2=\frac{x_2-\overline{x}_1x_3}{1-|x_3|^2}.
\end{equation*}

Then by a straightforward computation, it is not hard to see $\overline{\mathbb{H}}\subset \overline{\mathbb{B}^2}\times \overline{\mathbb{D}}\times \overline{\mathbb{D}}$. More precisely, we have the following result.
\begin{lemma}[Lemma 6.5 in \cite{Hexablock}]\label{le2.3}
Let $(a,x_1,x_2,x_3)\in \overline{\mathbb{H}}$. Then $|a|^2+|x_1|^2\leq1$ and $|a|^2+|x_2|^2\leq1$. Moreover, if $(a,x_1,x_2,x_3)\in\mathbb{H}$, then $|a|^2+|x_1|^2<1.$
\end{lemma}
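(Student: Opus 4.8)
The plan is to reduce the statement to the one-variable slices of the defining inequality of $\mathbb{H}$ and then to substitute a single, well-chosen point. First I would record what membership in $\overline{\mathbb{H}}$ gives. Since $\mathbb{H}\subset\mathbb{C}\times\mathbb{E}$, passing to closures yields $\overline{\mathbb{H}}\subset\mathbb{C}\times\overline{\mathbb{E}}$, so for $(a,x)\in\overline{\mathbb{H}}$ we have $x\in\overline{\mathbb{E}}$, and in particular $|x_1|\le 1$, $|x_2|\le 1$ because $\overline{\mathbb{E}}\subset\overline{\mathbb{D}}^3$ (also visible from Lemma \ref{Le2.3}). Next I would establish, by a routine continuity argument, that
\[
|\Psi_{z_1,z_2}(a,x)|\le 1\qquad\text{for every }(z_1,z_2)\in\mathbb{D}^2\text{ with }1-x_1z_1-x_2z_2+x_3z_1z_2\neq 0.
\]
Indeed, choosing a sequence in $\mathbb{H}$ converging to $(a,x)$ and using that $\Psi_{z_1,z_2}$ depends continuously on $(a,x)$ wherever its denominator is nonzero, the strict bounds in $\mathbb{H}$ pass to the non-strict bound in the limit.

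I would then specialize $z_2=0$, which collapses the relevant inequality to the one-variable estimate $\dfrac{|a|^2(1-|z_1|^2)}{|1-x_1z_1|^2}\le 1$ for $z_1\in\mathbb{D}$. The key observation is the substitution $z_1=\overline{x}_1$: when $|x_1|<1$ this lies in $\mathbb{D}$, the denominator becomes $1-|x_1|^2\neq 0$, and the inequality collapses to $\dfrac{|a|^2}{1-|x_1|^2}\le 1$, i.e. $|a|^2+|x_1|^2\le 1$. In the boundary case $|x_1|=1$ the point $\overline{x}_1$ is no longer in $\mathbb{D}$, so I would instead feed in $z_1=r\overline{x}_1$ with $r\uparrow 1$; the inequality then reads $|a|^2(1+r)\le 1-r$, which forces $a=0$ in the limit and hence $|a|^2+|x_1|^2=1\le 1$. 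Running the symmetric argument with $z_1=0$ and $z_2=\overline{x}_2$ (respectively $z_2=r\overline{x}_2$) gives $|a|^2+|x_2|^2\le 1$.

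Finally, for a genuine interior point $(a,x)\in\mathbb{H}$ one has $x\in\mathbb{E}$, so $|x_1|<1$ and $(\overline{x}_1,0)\in\mathbb{D}^2$; the \emph{strict} defining inequality gives
\[
|\Psi_{\overline{x}_1,0}(a,x)|=\frac{|a|}{\sqrt{1-|x_1|^2}}\le\sup_{z_1,z_2\in\mathbb{D}}|\Psi_{z_1,z_2}(a,x)|<1,
\]
which upgrades the estimate to the strict $|a|^2+|x_1|^2<1$. I expect the only mild obstacle to be the bookkeeping at the distinguished-boundary stratum where $|x_1|=1$ (so $\overline{x}_1\notin\mathbb{D}$ and direct substitution is illegal); this is precisely where the limiting substitution $z_1=r\overline{x}_1$, $r\uparrow 1$, is needed, and the blow-up of $\Psi$ unless $a=0$ is exactly the mechanism that pins the boundary behavior down. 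Everything else is a direct evaluation, so no genuine analytic difficulty is anticipated.
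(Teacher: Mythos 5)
Your argument is correct, but note that the paper does not prove this lemma at all: it imports it verbatim as Lemma 6.5 of Biswas--Pal--Tomar \cite{Hexablock}, remarking only that it follows ``by a straightforward computation.'' So there is no in-paper proof to match, and what you have written is a complete, self-contained verification. The crux is exactly the right one: since $x\in\mathbb{E}$ forces $|x_1|<1$ and the tetrablock condition guarantees the denominator $1-x_1z_1-x_2z_2+x_3z_1z_2$ never vanishes for $z_1,z_2\in\overline{\mathbb{D}}$, the single substitution $(z_1,z_2)=(\overline{x}_1,0)$ turns the strict defining inequality into $|a|^2+|x_1|^2<1$, and $(0,\overline{x}_2)$ gives the $x_2$-analogue. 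One structural remark: your route to the closure statement is longer than necessary. You first pass the bound $|\Psi_{z_1,z_2}|\le 1$ to $\overline{\mathbb{H}}$ pointwise in $(z_1,z_2)$ and then substitute, which forces you to split off the stratum $|x_1|=1$ and run the limiting substitution $z_1=r\overline{x}_1$, $r\uparrow 1$. But once the strict inequality $|a|^2+|x_1|^2<1$ is established on all of $\mathbb{H}$, the non-strict bound on $\overline{\mathbb{H}}$ follows immediately by taking limits of that scalar inequality along a sequence in $\mathbb{H}$ --- no case distinction and no boundary-stratum analysis needed. Your extra work is not wasted, though: the $r\uparrow 1$ blow-up argument proves the sharper fact that $|x_1|=1$ forces $a=0$ on $\overline{\mathbb{H}}$, which is genuinely more than the lemma asserts; just be aware it is optional for the statement as posed.
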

In addition, we also need some useful geometric properties of the hexablock.
\begin{proposition}[Proposition 6.9 and 6.10 in \cite{Hexablock}]
The hexablock is a polynomially convex and $(1,1,1,2)-$quasi-balanced domain.
\end{proposition}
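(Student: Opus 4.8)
The statement packages two independent properties, and the plan is to establish the quasi-balanced property first, since it is precisely the mechanism that will drive polynomial convexity.

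\emph{Quasi-balancedness.} Writing $\lambda\cdot(a,x_1,x_2,x_3):=(\lambda a,\lambda x_1,\lambda x_2,\lambda^2 x_3)$, I must show that $(a,x_1,x_2,x_3)\in\mathbb H$ and $\lambda\in\overline{\mathbb D}$ force $\lambda\cdot(a,x_1,x_2,x_3)\in\mathbb H$. For the base coordinates I would use the symmetric-matrix description of the tetrablock: if $(x_1,x_2,x_3)$ corresponds to a symmetric contraction $A$ (so that $x_1,x_2$ are the diagonal entries and $x_3=\det A$, with $\|A\|<1$), then $\lambda A$ is again symmetric with $\|\lambda A\|\le\|A\|<1$ and has image $(\lambda x_1,\lambda x_2,\lambda^2 x_3)$; hence $(\lambda x_1,\lambda x_2,\lambda^2 x_3)\in\mathbb E$. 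For the fibre coordinate I substitute $w_i=\lambda z_i$ (so $w_i\in\mathbb D$ since $|\lambda|\le1$) into the defining function and estimate
\[
\bigl|\Psi_{z_1,z_2}(\lambda\cdot(a,x_1,x_2,x_3))\bigr|
=|\lambda|\,\frac{|a|\sqrt{(1-|z_1|^2)(1-|z_2|^2)}}{\bigl|1-x_1 w_1-x_2 w_2+x_3 w_1 w_2\bigr|}
\le \bigl|\Psi_{w_1,w_2}(a,x_1,x_2,x_3)\bigr|,
\]
where the inequality uses $|\lambda|\le1$ together with $1-|z_i|^2\le 1-|w_i|^2$ (valid because $|w_i|\le|z_i|$). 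Taking the supremum over $z_1,z_2\in\mathbb D$ bounds it by $\sup_{w_1,w_2\in\mathbb D}|\Psi_{w_1,w_2}(a,x_1,x_2,x_3)|<1$, so the scaled point lies in $\mathbb H$.

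\emph{Polynomial convexity.} By Lemma~\ref{le2.3} the domain $\mathbb H$ is bounded, it contains the origin, and by the previous step it is a complete quasi-balanced domain for the weighted scaling above. The route I would take is the classical implication that a Stein domain on which polynomials are dense in $\mathcal O$ is polynomially convex. To obtain this density (the Runge property), fix $f\in\mathcal O(\mathbb H)$; for each $z\in\mathbb H$ the map $\lambda\mapsto f(\lambda\cdot z)$ is holomorphic on $\mathbb D$, so its coefficient functions
\[
P_d(z)=\frac{1}{2\pi i}\int_{|\lambda|=r} f(\lambda\cdot z)\,\lambda^{-d-1}\,d\lambda
\]
are holomorphic on $\mathbb H$ and weighted-homogeneous of weighted degree $d$. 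Since for each $d$ there are only finitely many monomials $a^{i_0}x_1^{i_1}x_2^{i_2}x_3^{i_3}$ with $i_0+i_1+i_2+2i_3=d$, comparison with the Taylor expansion at the origin forces each $P_d$ to be a polynomial. Completeness of $\mathbb H$ provides, over any compact $K\subset\mathbb H$, a collar $r>1$ with $\{\lambda\cdot z:z\in K,\ |\lambda|\le r\}\Subset\mathbb H$, and the Cauchy estimates then give geometric control $\sup_K|P_d|\le M r^{-d}$, so $f=\sum_d P_d$ converges uniformly on $K$. Thus polynomials are dense in $\mathcal O(\mathbb H)$.

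Finally, $\mathbb H$ is linearly convex and therefore pseudoconvex, hence a domain of holomorphy; being Stein and Runge it is polynomially convex (a point of the polynomial hull of a compact $K\subset\mathbb H$ defines a character of $\mathcal O(\mathbb H)$, which for a Stein domain is evaluation at a point of $\mathbb H$, and the coordinate polynomials identify that point with the given one). \emph{The main obstacle} I anticipate is the convergence step: one must guarantee the collar $r>1$ uniformly over a compact, which requires the continuity and properness of the weighted Minkowski functional $\rho(z)=\inf\{t>0:(a/t,x_1/t,x_2/t,x_3/t^2)\in\mathbb H\}$ of the bounded complete quasi-balanced domain $\mathbb H$; verifying that $\rho$ is continuous with $\mathbb H=\{\rho<1\}$, and then invoking the Stein plus Runge criterion cleanly, is where the real care is needed.
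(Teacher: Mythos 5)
Your proposal is correct, but there is nothing in the paper to compare it against: the paper does not prove this proposition at all --- it is imported wholesale by citation from Propositions 6.9 and 6.10 of Biswas--Pal--Tomar \cite{Hexablock}, and is then used as a black box (quasi-balancedness feeds into Theorem \ref{Ex} via Kosi\'nski's Lemma \ref{extension}). So your argument is, within this paper, necessarily a different and self-contained route, and it goes through. The quasi-balancedness step is airtight: the symmetric-matrix characterization of $\mathbb{E}$ gives $(\lambda x_1,\lambda x_2,\lambda^2 x_3)\in\mathbb{E}$ since $\det(\lambda A)=\lambda^2\det A$, and the substitution $w_i=\lambda z_i$ together with $1-|z_i|^2\le 1-|w_i|^2$ handles the fibre condition (the case $\lambda=0$ is trivial). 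The polynomial-convexity half is the classical argument for bounded complete quasi-balanced domains: weighted-homogeneous expansion $f=\sum_d P_d$ with each $P_d$ a polynomial, hence the Runge property, combined with Steinness and the character argument. Two remarks. First, the ``main obstacle'' you flag at the end is in fact no obstacle: for any open set that is quasi-balanced for all $\lambda\in\overline{\mathbb{D}}$, openness alone gives $t\cdot z\in\mathbb{H}$ for $t$ slightly larger than $1$ (because $t\cdot z\to z$ as $t\to 1$), so $\mathbb{H}=\{\rho<1\}$ automatically; moreover $\rho$ is upper semicontinuous since $\{(t,z):(1/t)\cdot z\in\mathbb{H}\}$ is open, whence $\sup_K\rho<1$ on every compact $K$ and the collar exists. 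Alternatively, the collar is exactly the relative compactness $\mathbb{H}_r\Subset\mathbb{H}$ of Corollary 6.4 in \cite{Hexablock}, which the present paper already invokes in the proof of Theorem \ref{Ex}. Second, you obtain pseudoconvexity from linear convexity, which is itself only quoted from \cite{Hexablock}; this dependency can be trimmed by noting that $u$ in \eqref{u} equals
\[
u(x)=\sup_{(z_1,z_2)\in\mathbb{D}^2}\Big[\log\big((1-|z_1|^2)(1-|z_2|^2)\big)-2\log\big|1-x_1z_1-x_2z_2+x_3z_1z_2\big|\Big],
\]
a supremum of functions pluriharmonic in $x$ on $\mathbb{E}$, hence plurisubharmonic (it is continuous), so $\mathbb{H}=\{(a,x)\in\mathbb{C}\times\mathbb{E}:\log|a|^2+u(x)<0\}$ is a pseudoconvex Hartogs domain over the pseudoconvex base $\mathbb{E}$, keeping the whole proof self-contained.
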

We recall that a domain $\Omega\subset\mathbb{C}^n$ is said to be $(m_1,\cdots,m_n)$-circular (shortly quasi-circular) if
$$\begin{pmatrix}\lambda^{m_1}z_1,\cdots,\lambda^{m_n}z_n\end{pmatrix}\in \Omega$$
for all $\lambda\in\mathbb{T}$ and $z=(z_1,\cdots,z_n)\in \Omega.$ If the relation holds for all $\lambda\in\mathbb{D}$, then $\Omega$ is said to be $(m_1,\cdots,m_n)$-balanced. A very interesting result about $(m_1,\cdots,m_n)$-balanced domains was the holomorphic extension of proper holomorphic mappings between two such domains, which was obtained by Kosi\' {n}ski \cite{Kosi}.
\begin{lemma}[Lemma 6 in \cite{Kosi}]\label{extension}
 Let $D, G$ be bounded domains in $\mathbb{C}^n$. Suppose that $G$ is $( m_1, \cdots , m_n)$-circular and that it contains the origin. Furthermore, we assume that the Bergman kernel function $K_{D}(z,\bar{\xi})~(z,\xi\in D)$ associated with $D$ satisfies the following property: for any open, relatively compact subset $E$ of $D$, there is an open set $U=U(E)$ that contains $\overline{D}$ such that $K_D(z,\bar{\xi})$ extends to be holomorphic on $U$ as a function of $z$ for each $\xi \in E$. Then, any proper holomorphic mapping $f: D \to G$ extends holomorphically to a neighborhood of $\overline {D}.$
\end{lemma}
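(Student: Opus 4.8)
The plan is to run a Bell-type argument: combine the transformation law of the Bergman projections under a proper map with the $(m_1,\dots,m_n)$-circular symmetry of $G$, and use the hypothesis on $K_D$ to replace integrals that reach $\partial D$ by integrals over a fixed compact subset of $D$. First I would record the structural facts. A proper holomorphic map $f\colon D\to G$ is a finite branched covering of some multiplicity $m$, its complex Jacobian $J_f=\det f'$ is not identically zero, and $\int_D|J_f|^2\,dV=m\,\mathrm{Vol}(G)<\infty$, so $J_f$ lies in the Bergman space $A^2(D)$. The analytic input is Bell's transformation formula for proper maps: writing $P_D,P_G$ for the Bergman projections of $D,G$, one has
\[
P_D\big(J_f\cdot(\phi\circ f)\big)=J_f\cdot\big((P_G\phi)\circ f\big)\qquad(\phi\in L^2(G)).
\]
I would feed in test functions $\phi\in C^\infty_c(G)$. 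Since $f$ is proper, $f^{-1}(\operatorname{supp}\phi)$ is a compact subset $E\Subset D$, so $J_f\cdot(\phi\circ f)$ is supported in $E$ and the projection becomes an honest integral,
\[
J_f(z)\,(P_G\phi)(f(z))=\int_E K_D(z,\bar\zeta)\,J_f(\zeta)\,\phi(f(\zeta))\,dV(\zeta).
\]
By hypothesis $z\mapsto K_D(z,\bar\zeta)$ extends holomorphically to a fixed neighbourhood $U\supset\overline D$ for each $\zeta\in E$, and integrating this holomorphically varying family over the compact set $E$ yields a function holomorphic in $z$ on $U$ (e.g.\ by Morera's theorem); hence $J_f\cdot\big((P_G\phi)\circ f\big)$ extends holomorphically to $U$ for every $\phi\in C^\infty_c(G)$.

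Next I would exploit the symmetry to pin down which holomorphic $\psi$ on $G$ satisfy ``$J_f\cdot(\psi\circ f)$ extends.'' For $\lambda\in\mathbb T$ the rotation $\rho_\lambda(w)=(\lambda^{m_1}w_1,\dots,\lambda^{m_n}w_n)$ is a linear automorphism of $G$ with $|\det|=1$, hence unitary on $L^2(G)$ and commuting with $P_G$. Averaging, for $\phi\in C^\infty_c(G)$ the function $\phi^{(d)}(w)=\int_{\mathbb T}\phi(\rho_\lambda w)\,\lambda^{-d}\,d\sigma(\lambda)$ again lies in $C^\infty_c(G)$, and $P_G\phi^{(d)}$ is the weighted-homogeneous degree-$d$ component of $P_G\phi$. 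Let $H_d\subset A^2(G)$ be the space of holomorphic $\psi$ with $\psi\circ\rho_\lambda=\lambda^d\psi$; expanding in Taylor series one sees $H_d=\operatorname{span}\{w^\gamma:\sum_k\gamma_km_k=d\}$, which (using $m_j\ge1$ and $0\in G$) is finite-dimensional, with $H_0=\mathbb C$. Because $\{P_G\phi:\phi\in C^\infty_c(G)\}$ is dense in $A^2(G)$, its projection onto the finite-dimensional $H_d$ is dense, hence all of $H_d$; so every $\psi\in H_d$ equals $P_G\phi$ for some $\phi\in C^\infty_c(G)$, and therefore $J_f\cdot(\psi\circ f)$ extends holomorphically to $U$ for every $\psi\in\bigcup_d H_d$, i.e.\ for every polynomial. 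Taking $\psi\equiv1\in H_0$ shows $J_f$ itself extends to $U$; taking $\psi=w_j\in H_{m_j}$ shows $J_f\cdot f_j$ extends to $U$ for each coordinate $j$.

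Finally I would recover $f$. Let $\widetilde{J_f}$ and $\widetilde{J_ff_j}$ denote the holomorphic extensions to $U$ produced above; on $D$ they satisfy $\widetilde{J_ff_j}=\widetilde{J_f}\,f_j$, and $\widetilde{J_f}\not\equiv0$. Since $f_j$ is bounded on $D$ (because $G$ is bounded), the quotient $\widetilde{J_ff_j}/\widetilde{J_f}$ is a holomorphic extension of $f_j$ on $U\setminus\{\widetilde{J_f}=0\}$, and Riemann's removable-singularity theorem, applied across the thin analytic set $\{\widetilde{J_f}=0\}$ and using the boundedness of $f$, upgrades this to a holomorphic extension of $f$ to a neighbourhood of $\overline D$.

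The main obstacle, and the step where both hypotheses enter essentially, is the passage in the first paragraph: properness of $f$ is exactly what makes $\phi\circ f$ compactly supported, which is what lets the extension hypothesis on $K_D$ (valid only when the second argument ranges in a compact subset) apply to the entire integral. The symmetry argument must then be calibrated so that the finite-dimensional pieces $H_d$ genuinely contain the coordinate functions $w_j$, and the concluding removable-singularity step must handle the zero locus of $\widetilde{J_f}$ meeting $\partial D$, where boundedness of $f$ is indispensable.
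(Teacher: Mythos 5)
The paper itself gives no proof of this lemma: it is imported verbatim from Kosi\'nski \cite{Kosi}, whose proof is exactly the Bell-type argument you reconstruct, so your route (Bell's transformation formula $P_D(J_f\cdot(\phi\circ f))=J_f\cdot((P_G\phi)\circ f)$, compact support of $\phi\circ f$ via properness, quasi-homogeneous averaging to realize every polynomial as $P_G\phi$, extension of $J_f\cdot(\psi\circ f)$ for all polynomials $\psi$) matches the source's method. Those first two paragraphs are sound, modulo one point you should at least acknowledge: to conclude via Morera that $z\mapsto\int_E K_D(z,\bar\zeta)J_f(\zeta)\phi(f(\zeta))\,dV(\zeta)$ is holomorphic on $U$, you need local boundedness of the extended kernels uniformly in $\zeta\in E$, which the hypothesis (pointwise in $\zeta$) does not literally state; this is standard to repair (Baire category or a Hartogs-cross argument on $U\times E$) and is used tacitly in \cite{Kosi}, but as written it is an unacknowledged step.

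The genuine gap is your final step. Riemann's removable singularity theorem requires local boundedness of $\widetilde{J_ff_j}/\widetilde{J_f}$ on a full punctured neighbourhood of each point of $Z=\{\widetilde{J_f}=0\}$, and boundedness of $f_j$ on $D$ only controls the quotient \emph{inside} $D$. Inside $D$ the set $Z\cap D$ (the branch locus) is harmless, since there the quotient agrees with the globally holomorphic $f_j$. The danger is at points $p\in Z\cap\partial D$: the hypersurface $Z$ can meet $\overline D$ only along $\partial D$ (even in a single point, as $\{z_1=1\}$ touches the ball), and then $U\setminus Z$ near $p$ contains points outside $\overline D$ where you have no bound whatsoever on the quotient; ``boundedness of $f$'' cannot be invoked there, so the removable-singularity step fails as stated. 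The classical repair --- the one Bell's and Kosi\'nski's arguments actually use, and which your own machinery already provides --- is to use \emph{all} powers: you proved that $u_k:=$ the extension of $J_f\,f_j^k$ exists on $U$ for every $k\ge 0$ (since $w_j^k\in H_{km_j}$). On $D$ one has $u_1^k=u_0^{\,k-1}u_k$, hence by the identity theorem this holds on all of $U$ (take $U$ connected). Comparing vanishing orders along any irreducible hypersurface $W\subset\{u_0=0\}$ gives $k\,\operatorname{ord}_W u_1\ge (k-1)\operatorname{ord}_W u_0$ for every $k$, whence $\operatorname{ord}_W u_1\ge \operatorname{ord}_W u_0$; thus the meromorphic function $u_1/u_0$ has polar set of codimension at least two and is therefore holomorphic on $U$, which yields the holomorphic extension of $f_j$. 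With that substitution your proof is complete and coincides with the proof of Lemma 6 in \cite{Kosi}.
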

\begin{theorem}\label{Ex}
Any proper holomorphic self-mapping of the hetrablock $\mathbb{H}$ extends holomorphically to a neighborhood of $\mathbb{H}$.
\end{theorem}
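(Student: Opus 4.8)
The proof is an application of Kosiński's extension criterion (Lemma \ref{extension}) with $D=G=\mathbb{H}$. The hypotheses on the target $G=\mathbb{H}$ are immediate from the preliminaries: $\mathbb{H}$ is bounded by Lemma \ref{le2.3} (it sits inside $\overline{\mathbb{B}^2}\times\overline{\mathbb{D}}\times\overline{\mathbb{D}}$); it is $(1,1,1,2)$-circular by the quoted Proposition on quasi-balancedness; and it contains the origin, since $u(0)=0$ forces $(0,0,0,0)\in\mathbb{H}$. Hence the whole weight of the argument falls on verifying the Bergman-kernel condition for the source $D=\mathbb{H}$: for every relatively compact open set $E$ with $E\Subset\mathbb{H}$ one must produce an open $U\supset\overline{\mathbb{H}}$ such that $K_{\mathbb{H}}(\cdot,\bar\xi)$ continues holomorphically to $U$ for each $\xi\in E$.

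To access $K_{\mathbb{H}}$ I would exploit the Hartogs fibration $\mathbb{H}=\{(a,x)\in\mathbb{C}\times\mathbb{E}:|a|^2<e^{-u(x)}\}$. Writing an arbitrary element of the Bergman space as a power series $\sum_{m\geq 1}a^{m-1}f_m(x)$ in the fibre variable and integrating out $a$ gives the orthogonal decomposition of $A^2(\mathbb{H})$ into the weighted Bergman spaces $A^2(\mathbb{E},e^{-mu})$ of the tetrablock, and correspondingly
\begin{equation*}
K_{\mathbb{H}}\big((a,x),(\bar b,\bar y)\big)=\frac{1}{\pi}\sum_{m=1}^{\infty}m\,(a\bar b)^{m-1}\,K^{(m)}(x,\bar y),
\end{equation*}
where $K^{(m)}$ is the reproducing kernel of $A^2(\mathbb{E},e^{-mu})$. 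For $\xi=(b,y)\in E$ the base point $y$ stays in a fixed compact subset of $\mathbb{E}$ and $|b|^2\leq e^{-u(y)}-\delta_0$ for some $\delta_0>0$; the problem thus reduces to continuing each $K^{(m)}(\cdot,\bar y)$ in $x$ across $\partial\mathbb{E}$ and then summing the series on a neighbourhood of $\overline{\mathbb{H}}$. Polynomial convexity of $\mathbb{H}$ (quoted above) is what justifies the density of polynomials underlying this fibrewise identification.

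The analytic engine is the real-analyticity of the roof function $u$ on $\mathbb{E}$, which follows from its explicit description \eqref{u} together with the closed forms for $z_1(x),z_2(x)$ and $\beta_1,\beta_2$; moreover $u$ is plurisubharmonic because $\mathbb{H}$ is pseudoconvex. Real-analyticity lets one polarise $u$ to a kernel $\Phi(x,\bar y)$, holomorphic in $x$ and anti-holomorphic in $y$, with $\Phi(x,\bar x)=u(x)$, and I expect the off-diagonal weighted kernels to satisfy uniform asymptotics of the shape $K^{(m)}(x,\bar y)\sim c_m\,e^{m\Phi(x,\bar y)}$ with $c_m$ of polynomial growth. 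This polarisation is exactly what yields holomorphic continuation of $K^{(m)}(\cdot,\bar y)$ in $x$ past $\partial\mathbb{E}$ for $y$ compactly contained, while pairing $(a\bar b)^{m-1}$ against $e^{(m-1)\,\mathrm{Re}\,\Phi}$ converts the slack $\delta_0$ into geometric convergence of the series for $|a|^2$ slightly exceeding $e^{-u(x)}$ and $x$ slightly outside $\overline{\mathbb{E}}$.

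The principal obstacle is precisely the uniform-in-$m$ control of the weighted kernels $K^{(m)}$: one must establish both the holomorphic continuation in $x$ and growth bounds of the form $|K^{(m)}(x,\bar y)|\leq C\,R(x,\bar y)^m$ sharp enough to sum the Hartogs series on a full neighbourhood of $\overline{\mathbb{H}}$, and this must be carried out near the non-smooth boundary strata of $\mathbb{E}$ — in particular over $b\mathbb{E}$, where the fibres of $\mathbb{H}$ collapse and $u\to+\infty$. To tame these kernels I would bring in the realisation of $\mathbb{E}$ as the $2$-to-$1$ branched image, under $A\mapsto(a_{11},a_{22},\det A)$, of the symmetric matrix ball $R_{\mathrm{III}}(2)$, whose Bergman kernel is the rational function $c\,\det(I-AB^{*})^{-3}$; transporting weighted kernels through this finite map should reduce the required continuation and estimates to manipulations of explicit rational data. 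Once the Bergman-kernel condition is secured, Lemma \ref{extension} delivers the holomorphic extension of every proper self-map of $\mathbb{H}$ to a neighbourhood of $\overline{\mathbb{H}}$, which is the assertion of Theorem \ref{Ex}.
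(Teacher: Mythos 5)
Your framing is correct as far as it goes: the theorem is an application of Lemma \ref{extension} with $D=G=\mathbb{H}$, and your verification of the hypotheses on the target (bounded by Lemma \ref{le2.3}, $(1,1,1,2)$-circular, containing the origin) is fine. The genuine gap is exactly where you say ``the whole weight of the argument falls'': you never actually establish the Bergman-kernel condition for the source $D=\mathbb{H}$. What you offer instead is a program --- the Hartogs-fibration decomposition $K_{\mathbb{H}}=\frac{1}{\pi}\sum_m m(a\bar b)^{m-1}K^{(m)}(x,\bar y)$, polarization of $u$, conjectured off-diagonal asymptotics $K^{(m)}(x,\bar y)\sim c_m e^{m\Phi(x,\bar y)}$, uniform-in-$m$ growth bounds, and a reduction through the branched cover by the symmetric matrix ball --- whose decisive steps you yourself flag as unproved (``I expect\dots'', ``The principal obstacle is\dots''). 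Off-diagonal Bergman asymptotics with holomorphic continuation across the boundary are not available in this setting: $\mathbb{E}$ is non-smooth, $u\to+\infty$ at $b\mathbb{E}$ where the fibres collapse, and the series over $m$ would have to converge on a full neighbourhood of $\overline{\mathbb{H}}$, i.e.\ precisely over those degenerate strata. As written, the argument does not close, and I see no realistic way to close it along these lines.

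The missing idea --- and the paper's entire proof --- is that the Bergman-kernel condition is \emph{automatic} for quasi-balanced domains whose dilations are relatively compact; this is Remark 7 in Kosi\'{n}ski \cite{Kosi}. By Corollary 6.4 in \cite{Hexablock}, $\mathbb{H}_r:=\{(ra,rx_1,rx_2,r^2x_3):(a,x)\in\mathbb{H}\}\Subset\mathbb{H}$ for all $0<r<1$, and this is the one geometric input you never invoke. The mechanism is elementary: writing $\lambda\cdot(a,x):=(\lambda a,\lambda x_1,\lambda x_2,\lambda^2 x_3)$, the transformation rule under the rotations $\lambda\in\mathbb{T}$ gives $K_{\mathbb{H}}(\lambda\cdot z,\bar\xi)=K_{\mathbb{H}}\bigl(z,\overline{\bar\lambda\cdot\xi}\bigr)$; both sides are holomorphic in $\lambda$ on a neighbourhood of $\overline{\mathbb{D}}$, so the identity persists at $\lambda=r<1$. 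Hence, given $E\Subset\mathbb{H}$, choose $r$ with $\overline{E}\subset r\cdot\mathbb{H}$ and set $\widetilde K(z):=K_{\mathbb{H}}\bigl(r\cdot z,\overline{r^{-1}\cdot\xi}\bigr)$: this is holomorphic on the open set $U:=r^{-1}\cdot\mathbb{H}$, agrees with $K_{\mathbb{H}}(\cdot,\bar\xi)$ on $\mathbb{H}$ by the identity, and $U\supset\overline{\mathbb{H}}$ precisely because $\overline{\mathbb{H}_r}\subset\mathbb{H}$. This two-line dilation argument replaces your entire analytic program, after which Lemma \ref{extension} applies verbatim.
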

\begin{proof}
By Corollary 6.4 in Biswas-Pal-Tomar \cite{Hexablock}, we know that 
$$\mathbb{H}_r:=\{(ra,rx_1,rx_2,r^2x_3),\;(a,x_1,x_2,x_3)\in \mathbb{H}\}$$
are relatively compact in $\mathbb{H}$ for any $0<r<1$. Then the conclusion follows from Remark 7 in Kosi\'{n}ski 
\cite{Kosi} and Lemma \ref{extension}.
\end{proof}
\subsection{Geometric properties of the topological boundary of the hexablock} 
We shall divide the boundary of hexablock into three parts $\partial\mathbb{H}=\partial_1\mathbb{H}\cup\partial_2\mathbb{H}\cup\partial_3\mathbb{H}$, where
\begin{equation*}
  \begin{aligned}
(1)\; & \partial_1\mathbb{H}=\partial\mathbb{H}\cap(\mathbb{C}\times\mathbb{E}),\\
(2)\; & \partial_2\mathbb{H}=\partial\mathbb{H}\cap(\mathbb{C}\times\left(\partial\mathbb{E}\setminus(b\mathbb{E}\cup\{(x_1,x_2,x_3)\in\overline{\mathbb{E}}: x_1x_2=x_3\}\right),\\
(3)\; & \partial_3\mathbb{H}=\partial\mathbb{H}\setminus{(\partial_1\mathbb{H}\cup\partial_2\mathbb{H})}.
  \end{aligned}  
\end{equation*} 
Note that our division differs from that in Biswas-Pal-Tomar \cite{Hexablock}. The boundary properties under this new division will be analyzed.

\textbf{I.} It is evident that 

$$\partial_1\mathbb{H}=\left\{(a,x_{1},x_{2},x_{3})\in\mathbb{C}\times\mathbb{E}:|a|^{2}=e^{-u(x_{1},x_{2},x_{3})}\right\}.$$

The topological codimension of $\partial_1\mathbb{H}$ is equal to $1$ and any point $p$ of $\partial_1\mathbb{H}$ is a smooth
point of $\partial\mathbb{H}$. In addition, we will show the following.

\begin{lemma}\label{b1}
There are no $2$-dimensional analytic discs in $\partial_1\mathbb{H}$, and $\partial_1\mathbb{H}$ may be foliated with $1$-dimensional analytic discs.
\end{lemma}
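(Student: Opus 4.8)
The plan is to exploit the Hartogs structure $\partial_1\mathbb{H}=\{(a,x):x\in\mathbb{E},\ |a|^2=e^{-u(x)}\}$ and to reduce the existence of analytic discs to the degeneracy of the complex Hessian (Levi form) $H_u=\left(\frac{\partial^2 u}{\partial x_j\partial\bar x_k}\right)$ of the potential $u$ on $\mathbb{E}$. First I would note that on $\partial_1\mathbb{H}$ one has $|a|^2=e^{-u(x)}>0$, so any holomorphic map $\phi=(a,x):\Omega\to\partial_1\mathbb{H}$ has non-vanishing $a$; hence $\log|a|^2=2\,\mathrm{Re}\log a$ is pluriharmonic, and the identity $u(x(\zeta))=-\log|a(\zeta)|^2$ forces $u\circ x$ to be pluriharmonic on $\Omega$. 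Since $\mathbb{H}$ is linearly convex, hence pseudoconvex, $u$ is plurisubharmonic and $H_u\ge 0$; consequently $u\circ x$ pluriharmonic is equivalent to $(dx)^{*}H_u\,(dx)\equiv 0$, i.e.\ the image of $dx$ lies in $\ker H_u$ at every point. A short factorization argument shows that if the $x$-projection of a disc has rank $\le 1$ then the whole disc has rank $\le 1$ (on a fibre of $x$, a holomorphic $a$ of constant modulus is constant), so both assertions of the lemma reduce to the single statement $\dim_{\mathbb{C}}\ker H_u(x)=1$ for every $x\in\mathbb{E}$.

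Next I would compute $H_u$ at the normal-form points $(0,0,r)$, $r\in[0,1)$, where the maximizers satisfy $z_1(0,0,r)=z_2(0,0,r)=0$ and $u(0,0,r)=0$. Writing $u(x)=\log\Phi(x)$ with $\Phi(x)=\sup_{z}\frac{(1-|z_1|^2)(1-|z_2|^2)}{|1-x_1z_1-x_2z_2+x_3z_1z_2|^2}$ and $F(z,x)=\log\frac{(1-|z_1|^2)(1-|z_2|^2)}{|1-x_1z_1-x_2z_2+x_3z_1z_2|^2}$, the envelope identities $F_{z_l}=F_{\bar z_l}=0$ at the maximizer give $u_{x_j}=F_{x_j}$; and since $F$ is holomorphic in $x$ for fixed $z$, so that $F_{x_j\bar x_k}=F_{x_j\bar z_l}=0$, one gets $u_{x_j\bar x_k}=\sum_l F_{x_j z_l}\,\frac{\partial z_l}{\partial\bar x_k}$. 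Differentiating the critical equations $F_{z_l}\equiv 0$ in $x_k$ and $\bar x_k$ and solving the resulting Wirtinger linear system (using $F_{z_1x_1}=F_{z_2x_2}=1$, $F_{z_1z_2}=-r$, $F_{z_l\bar z_l}=-1$, with the remaining mixed derivatives vanishing) yields $\frac{\partial z_1}{\partial\bar x_k}=\overline{F_{z_1x_k}}/(1-r^2)$ and the analogue for $z_2$. This produces the explicit Levi form $H_u(0,0,r)=\frac{1}{1-r^2}\,\mathrm{diag}(1,1,0)$, which is positive semidefinite of rank exactly $2$ with kernel $\mathbb{C}\,\partial_{x_3}$.

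I would then propagate this to all of $\mathbb{E}$ via the automorphism group. Each $\tau\in\operatorname{Aut}(\mathbb{E})$ lifts to an automorphism $(a,x)\mapsto(m_\tau(x)a,\tau(x))$ of $\mathbb{H}$ with $m_\tau$ a non-vanishing holomorphic multiplier read off from \eqref{form1}--\eqref{form2}; comparing $|a|^2=e^{-u}$ on $\partial_1\mathbb{H}$ gives $u\circ\tau=u-\log|m_\tau|^2$, and since $\log|m_\tau|^2$ is pluriharmonic the chain rule yields the congruence $(d\tau)^{*}H_u(\tau(x))\,(d\tau)=H_u(x)$. As $d\tau$ is invertible this preserves signature, and because every $x\in\mathbb{E}$ lies in the $\operatorname{Aut}(\mathbb{E})$-orbit of some $(0,0,r)$ by Theorem \ref{Young}, $H_u$ is positive semidefinite of constant rank $2$ on all of $\mathbb{E}$, with $\ker H_u$ a smooth complex line field. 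Rank $2$ leaves no $2$-dimensional null subspace, so by the reduction above a hypothetical $2$-dimensional disc would need an $x$-projection of rank $2$ with $\operatorname{Im}(dx)\subseteq\ker H_u$, a contradiction; hence there is no $2$-dimensional analytic disc in $\partial_1\mathbb{H}$. For the foliation, the disc $\Delta_0=\{(0,0,\lambda):\lambda\in\mathbb{D}\}$ is tangent to $\ker H_u$ and carries $u\equiv 0$, so its translates $\tau(\Delta_0)$ are analytic discs tangent to $\ker H_u$ through every point of $\mathbb{E}$; their existence makes the constant-rank real distribution underlying $\ker H_u$ integrable and exhibits the leaves as complex curves. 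Finally, over each leaf $L$ the pluriharmonic $u|_L$ admits a unimodular family of holomorphic lifts $a$ with $|a|^2=e^{-u|_L}$, and the resulting $1$-dimensional discs foliate $\partial_1\mathbb{H}$.

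I expect the main obstacle to be the second-order envelope computation of $H_u(0,0,r)$: because $z_1(x),z_2(x)$ are only implicitly defined maximizers, the Levi form receives its entire contribution from the variation $\partial z_l/\partial\bar x_k$, so one must set up and solve the Wirtinger-derivative system coming from the critical equations, carefully tracking the holomorphic and antiholomorphic dependence, rather than differentiating $u$ directly.
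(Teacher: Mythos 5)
Your proposal is correct, but it takes a genuinely different route from the paper, and a substantially heavier one. The paper's own proof is pointwise and elementary: given a $2$-dimensional disc $\varphi=(f,g_1,g_2,g_3):\mathbb{D}^2\to\partial_1\mathbb{H}$, it normalizes the center over $(0,0,r)$ via Theorem \ref{Young} and the lifted automorphisms \eqref{form1}--\eqref{form2}, so that $u=0$ there and $|f(0,0)|=1$; since $|a|^2+|x_1|^2\le 1$ and $|a|^2+|x_2|^2\le 1$ on $\overline{\mathbb{H}}$ (Lemma \ref{le2.3}), the maximum modulus principle forces $f$ to be a unimodular constant, and the same inequality then forces $g_1\equiv g_2\equiv 0$, leaving the rank-$\le 1$ disc $(e^{i\theta},0,0,g_3)$, a contradiction; no curvature computation is needed. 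Your route instead makes the second-order structure explicit: pluriharmonicity of $u$ along the base projection of any disc, the envelope (Wirtinger) computation of the Levi form at the normal forms, and transport of $H_u$ by the multiplier identity $u\circ\tau=u-\log|m_\tau|^2$. I checked your key computation and it is right: with $F_{z_1x_1}=F_{z_2x_2}=1$, $F_{z_1z_2}=-r$, $F_{z_l\bar z_l}=-1$ and the remaining mixed derivatives zero, one gets $\partial z_l/\partial\bar x_k=\overline{F_{z_lx_k}}/(1-r^2)$ and hence $H_u(0,0,r)=\frac{1}{1-r^2}\,\mathrm{diag}(1,1,0)$; an independent check is the expansion $u(x_1,0,r)=\frac{|x_1|^2}{1-r^2}+O(|x_1|^4)$, and the paper's value $u_{x_1\bar x_1}(0,0,0)=1$ is the case $r=0$. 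What your approach buys is a stronger structural conclusion: $H_u$ has constant rank $2$ on all of $\mathbb{E}$, with kernel the line field tangent to the $x_3$-discs, which rules out $2$-discs and simultaneously identifies the $1$-disc foliation intrinsically as the Levi-kernel foliation; what the paper's approach buys is brevity and the avoidance of any differentiation of the implicitly defined maximizers $z_1(x),z_2(x)$. Both treatments handle the foliation half of the statement at the same, somewhat informal, level of rigor (exhibiting discs through every point via the automorphism lifts of $\{(e^{i\theta},0,0,\lambda):\lambda\in\mathbb{D}\}$).
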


\begin{proof}
Assume there exists a $2$-dimensional analytic disc
\[
\varphi(z_1,z_2) = \left(f(z_1,z_2), g_1(z_1,z_2), g_2(z_1,z_2), g_3(z_1,z_2)\right) : \mathbb{D}^2 \to \partial_1\mathbb{H}
\]
with $\varphi(0,0) = (a^0, x^0_1, x^0_2, x^0_3) \in \partial_1\mathbb{H}$. By Theorem \ref{Young}, the automorphisms \eqref{form1} and \eqref{form2}, we may assume
\[
(x^0_1, x^0_2, x^0_3) = (0, 0, r), \quad 0 \leq r < 1.
\]
This implies $z_1(x^0) = z_2(x^0) = 0$ (the values maximizing $|\Psi_{z_1,z_2}|$ in \eqref{linear}), and consequently $u(x^0_1, x^0_2, x^0_3) = 0$. Therefore, we must have $|a^0| = 1$. 

By Lemma \ref{le2.3}, $f(z_1, z_2)$ attains its maximum at $(0, 0)$, and thus is constant. Applying Lemma \ref{le2.3} again yields
\[
|g_1(z_1, z_2)|^2 = |g_2(z_1, z_2)|^2 = 0 \quad \forall \, (z_1, z_2) \in \mathbb{D}^2,
\]
which implies $g_1(z_1, z_2) \equiv 0$ and $g_2(z_1, z_2) \equiv 0$. Consequently, $\varphi$ reduces to the form
\[
\varphi = \left(e^{i\theta}, 0, 0, g_3(z_1, z_2)\right).
\]

However, this contradicts the 2-dimensionality of $\varphi$ since $\operatorname{dim}_\mathbb{C} \varphi(\mathbb{D}^2) \leq 1$.
\end{proof}

\textbf{II.} Apparently,

\begin{equation*}
    \partial_2\mathbb{H} = \big\{ (a, x_1, x_2, x_3) \in\mathbb{C}\times \partial\mathbb{E}:\ 
    |a|^2 < e^{-u(x_1,x_2,x_3)},\ 
    x_1x_2 \neq x_3,\ 
    |x_3| \neq 1 \big\}.
\end{equation*}
The topological codimension of $\partial_2\mathbb{H}$ is evidently  $1$. In fact, we have the following characterization:

\begin{lemma}
$\partial_2\mathbb{H}:=\left\{(a,x_1,x_2,x_3)\in\mathbb{C}\times\partial\mathbb{E}: |a|^2<|x_1x_2-x_3|, \vert x_3\vert\neq 1\right\}.$
\end{lemma}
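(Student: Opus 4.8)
The two formulas for $\partial_2\mathbb{H}$ agree except in their defining inequality, so the entire lemma reduces to the identity
\[
e^{-u(x_1,x_2,x_3)}=|x_1x_2-x_3|,\qquad x\in\partial\mathbb{E},\ |x_3|\neq1.
\]
Once this is known the two sets coincide: the inequality $|a|^2<|x_1x_2-x_3|$ automatically forces $x_1x_2\neq x_3$ (otherwise the right-hand side is $0$), which is precisely the locus removed in the original description of $\partial_2\mathbb{H}$, while the condition $|x_3|\neq1$ excises $b\mathbb{E}$ by Lemma \ref{le2.5}, using that $|x_3|\leq1$ throughout $\partial\mathbb{E}$ by Lemma \ref{Le2.3}(4). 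Thus I would devote the proof entirely to the displayed identity.

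To evaluate $e^{-u}$ I would unwind \eqref{u} into the extremal form
\[
e^{-u(x)}=\inf_{z_1,z_2\in\mathbb{D}}\frac{\bigl|1-x_1z_1-x_2z_2+x_3z_1z_2\bigr|^2}{(1-|z_1|^2)(1-|z_2|^2)},
\]
where for $x\in\mathbb{E}$ the infimum is attained at $(z_1(x),z_2(x))$, and for $x\in\partial\mathbb{E}$ the left-hand side is read as the continuous boundary value of $e^{-u}$ (an agreement I check at the end). For fixed $z_2$ the numerator factors as $(1-x_2z_2)(1-cz_1)$ with $c=\Psi(z_2,x)=\frac{x_1-x_3z_2}{1-x_2z_2}$; since $x\in\overline{\mathbb{E}}$ forces $|c|=|\Psi(z_2,x)|\leq1$ (see \cite{Abouhajar-White-Young}), the elementary identity $\inf_{z_1\in\mathbb{D}}\frac{|1-cz_1|^2}{1-|z_1|^2}=1-|c|^2$ collapses the infimum over $z_1$ and yields
\[
e^{-u(x)}=\inf_{z_2\in\mathbb{D}}\frac{|1-x_2z_2|^2-|x_1-x_3z_2|^2}{1-|z_2|^2}.
\]
Optimizing over the argument of $z_2$ and writing $r=|z_2|$ then reduces everything to minimizing
\[
h(r)=\frac{A-2|p|\,r+Br^2}{1-r^2},\qquad A=1-|x_1|^2,\ B=|x_2|^2-|x_3|^2,\ p=x_2-\overline{x_1}x_3,
\]
over $r\in[0,1)$.

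The crux, and the step I expect to be the main obstacle, is to locate this last infimum on the boundary. Here the defining relations of $\partial\mathbb{E}$ furnish the key identity $A+B=2|p|$: Lemma \ref{Le2.3}(3) gives $|p|=1-|x_1|^2-|x_1x_2-x_3|$, Lemma \ref{Le2.3}(4) gives $|x_1x_2-x_3|=\tfrac12(1-|x_1|^2-|x_2|^2+|x_3|^2)$, and eliminating $|x_1x_2-x_3|$ produces $2|p|=1-|x_1|^2+|x_2|^2-|x_3|^2=A+B$. With this identity a direct computation shows the numerator of $h'(r)$ is a negative multiple of $(r-1)^2$, so $h$ is strictly decreasing on $[0,1)$ and its infimum is the limit as $r\to1^-$; since $A+B=2|p|$ also makes the numerator of $h$ vanish at $r=1$, l'Hôpital gives $\lim_{r\to1^-}h(r)=|p|-B=\tfrac{A-B}{2}=|x_1x_2-x_3|$, the last equality being Lemma \ref{Le2.3}(4) once more. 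This both explains why the extremal $z_2(x)$ runs to $\partial\mathbb{D}$ as $x$ reaches $\partial\mathbb{E}$ and produces the claimed value.

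It remains to confirm that this infimum is the continuous boundary value of $e^{-u}$. For interior $x\in\mathbb{E}$ the same one-variable analysis locates the minimizer at the root $r_0\in(0,1)$ of $r^2-\frac{A+B}{|p|}r+1=0$ and gives the closed form $e^{-u(x)}=\tfrac12\bigl(A-B+\sqrt{(A+B)^2-4|p|^2}\bigr)$; as $x\to\partial\mathbb{E}$ one has $A+B\to2|p|$, so this expression tends to $\tfrac{A-B}{2}=|x_1x_2-x_3|$. Hence $e^{-u}$ extends continuously to $\partial\mathbb{E}$ with exactly the value computed above, which completes the proof of the identity and therefore of the lemma.
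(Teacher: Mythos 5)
Your proposal is correct in substance, and it takes a genuinely different route from the paper's. The paper argues by invariance: it invokes Lemma 9.7 of \cite{Hexablock} to move a point of $\partial\mathbb{E}\setminus b\mathbb{E}$ with $x_1x_2\neq x_3$ into the normal form $(0,r,1-r)$, computes the extremal value there by a two-real-variable critical-point analysis (getting the fiber condition $|a|^2<1-r$), and then transports this back to a general point through the explicit automorphisms \eqref{form1}--\eqref{form2}, under which $1-r$ becomes $|x_1x_2-x_3|$. You instead work directly at an arbitrary boundary point: the factorization $1-x_1z_1-x_2z_2+x_3z_1z_2=(1-x_2z_2)\left(1-\Psi(z_2,x)z_1\right)$ collapses the $z_1$-infimum, and the boundary identities of Lemma \ref{Le2.3}(3),(4) give $A+B=2|p|$, which makes your $h$ monotone with limit $\tfrac{A-B}{2}=|x_1x_2-x_3|$. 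I checked your computations, including the interior closed form $e^{-u}=\tfrac12\left(A-B+\sqrt{(A+B)^2-4|p|^2}\right)$; they are right (in fact, with $A+B=2|p|$ one has simply $h(r)=(A-Br)/(1+r)$, so even l'H\^{o}pital can be dispensed with). Your route buys self-containedness: no normal-form lemma from \cite{Hexablock}, no automorphism transfer, and as a by-product you get continuity of $e^{-u}$ up to $\partial\mathbb{E}$, which makes precise the boundary-value reading of the condition $|a|^2<e^{-u(x)}$ that the paper invokes only tacitly (``according to the definition of $\overline{\mathbb{H}}$''). The paper's route buys brevity and stays inside machinery (normal forms and the group $G(\mathbb{H})$) already needed for the main theorem.

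Two caveats. First, your strict-monotonicity claim for $h$ needs $|p|\neq 0$: you should note that $p=x_2-\overline{x_1}x_3=0$ together with $x\in\partial\mathbb{E}$ forces, via Lemma \ref{Le2.3}(3), either $x_1x_2=x_3$ or $|x_3|=1$, both excluded here (and in the degenerate case $p=0$ one has $B=-A$, so $h\equiv A=\tfrac{A-B}{2}$ and the value is unchanged); similarly, your interior analysis presumes $p\neq0$ and $A+B>2|p|$ so that $r_0\in(0,1)$ exists, which is where the attainment statement from Chapter 3 of \cite{Hexablock} should be cited. Second, a defect you inherit from the paper rather than introduce: both proofs identify the fiber of $\partial_2\mathbb{H}$ over $x$ with the strict sublevel set $\{|a|^2<|x_1x_2-x_3|\}$, but under the paper's set-theoretic definition $\partial_2\mathbb{H}=\partial\mathbb{H}\cap(\mathbb{C}\times(\partial\mathbb{E}\setminus\cdots))$ the fiber also contains the circle $|a|^2=|x_1x_2-x_3|$: along the interior path $(tx_1,tx_2,x_3)$, $t\uparrow 1$, one has $A_t-B_t\geq A-B$ and $D_t>0$, so by your closed form $e^{-u}$ approaches $|x_1x_2-x_3|$ from above and the closure picks up the equality points. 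Strictly speaking the lemma should read ``$\leq$''; your continuity computation is precisely what makes this visible, so it is worth a remark rather than silence.
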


\begin{proof}
 Consider a point $(a, x_1, x_2, x_3) \in \partial_2\mathbb{H}$, and let $x := (x_1, x_2, x_3) \in \partial\mathbb{E} \setminus b\mathbb{E}$ with $x_1 x_2 \neq x_3$. By Lemma 9.7 in Biswas-Pal-Tomar \cite{Hexablock}, we may assume that
$$x = (0, r, 1 - r), \quad \text{for some } r \in (0, 1).$$  
According to the definition of $\overline{\mathbb{H}}$, it suffices to compute  
$$\sup_{z_1,z_2\in\mathbb{D}} \left| \frac{(1 - |z_1|^2)(1 - |z_2|^2)}{(1 - r z_2 + (1 - r) z_1 z_2)^2} \right|.$$

Set $z_1 = a e^{i\theta_1}$ and $z_2 = b e^{i\theta_2}$ with $0 \leq a, b < 1$. Then
\begin{align*}
    \left| 1 - r z_2 + (1-r) z_1 z_2 \right| 
    &= \left| 1 - r b e^{i\theta_2} + (1-r) a b e^{i(\theta_1+\theta_2)} \right| \\
    &= \left| 1 - b e^{i\theta_2} \left( r - (1-r) a e^{i\theta_1} \right) \right| \\
    &\geq 1 - b \left| r - (1-r) a e^{i\theta_1} \right|, 
\end{align*}
where 
\[
\left| r - (1-r) a e^{i\theta_1} \right| \leq r + (1-r) a < 1.
\]

Consequently, we obtain the uniform estimate:
$$\left| \frac{(1 - |z_1|^2)(1 - |z_2|^2)}{(1 - r z_2 + (1-r) z_1 z_2)^2} \right| \leq \frac{(1 - a^2)(1 - b^2)}{[1 - b (r + (1-r) a)]^2} =: h(a, b).$$

Hence, 
$$\frac{\partial h}{\partial a} = \frac{2(1 - b^2)}{[1 - b(r + (1-r)a)]^3} \left( (1-r)b - a(1 - br) \right).$$

This implies that the critical point occurs when:
$$(1-r)b - a(1 - br) = 0 \quad \Rightarrow \quad a = \frac{(1-r)b}{1 - br}.$$
At this critical point, we have:
\begin{equation*}
    \begin{aligned}
h(a,b) &\leq h\left( \frac{(1-r)b}{1 - br}, b \right) \\
&= \frac{(1 - b^2) \left[ (1 - br)^2 - (1-r)^2b^2 \right]}{\left[ (1 - br)^2 - (1-r)^2b^2 \right]^2} \\
&= \frac{1 + b}{1 + b - 2br}.
\end{aligned}
\end{equation*}

Consequently, the supremum is given by:
$$\sup_{|z_1| < 1, \, |z_2| < 1} \left| \frac{(1 - |z_1|^2)(1 - |z_2|^2)}{(1 - r z_2 + (1 - r) z_1 z_2)^2} \right| = \frac{1}{1 - r},$$
where we take the limit as $b \to 1^-$. This establishes the boundary condition:
\begin{equation}\label{boundary}
    |a|^2 < 1 - r.
\end{equation}

For any point $(a^0, x^0_1, x^0_2, x^0_3) \in \partial_2\mathbb{H}$, the automorphism \eqref{form1} or \eqref{form2} provides an explicit automorphism $F \in \mathrm{Aut}(\mathbb{H})$ with parameters $z_1 = x^0_1, z_2 = 0$ such that  
$$F(a^0, x^0_1, x^0_2, x^0_3) = \left( w\xi_2 \frac{a^0}{\sqrt{1 - |x^0_1|^2}},\  0,\  \xi_2 \frac{x^0_2 - \overline{x^0_1} x^0_3}{1 - |x^0_1|^2},\  -\xi_1\xi_2 \frac{x^0_1 x^0_2 - x^0_3}{1 - |x^0_1|^2} \right).$$  
Denote this image point by  
$$\left( \widetilde{a},\ 0,\ \widetilde{r},\ 1 - \widetilde{r} \right).$$  
Applying the boundary condition \eqref{boundary} to $\widetilde{a}$ and $\widetilde{r}$, we obtain  
$$|a^0|^2 < |x^0_1 x^0_2 - x^0_3|.$$  
This completes the proof.
\end{proof}

This leads to the fundamental foliation structure.
\begin{lemma}\label{b2}
The boundary component $\partial_2\mathbb{H}$ admits a foliation by $2$-dimensional analytic discs.   
\end{lemma}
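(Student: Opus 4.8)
The plan is to exhibit an explicit family of holomorphic maps $\varphi_{\beta_1,\beta_2}\colon\mathbb{D}^2\to\partial_2\mathbb{H}$ whose images are pairwise disjoint non-degenerate $2$-dimensional analytic discs and together exhaust $\partial_2\mathbb{H}$. The guiding idea is to split the two complex directions: one disc direction lives in the tetrablock factor $\partial\mathbb{E}$, while the other lives in the Hartogs fibre variable $a$, whose admissible range over a boundary point $x$ is governed by the inequality $|a|^2<|x_1x_2-x_3|$ established in the preceding lemma.

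First I would produce the analytic discs inside $\partial\mathbb{E}$. Using the characterization that $x\in\mathbb{E}$ exactly when there exist $\beta_1,\beta_2$ with $|\beta_1|+|\beta_2|<1$ and $x_1=\beta_1+\bar\beta_2x_3$, $x_2=\beta_2+\bar\beta_1x_3$ (the fifth item of the tetrablock characterization in Section 2), together with the relations $\beta_1=\frac{x_1-\bar{x}_2x_3}{1-|x_3|^2}$, $\beta_2=\frac{x_2-\bar{x}_1x_3}{1-|x_3|^2}$ recorded in the Hexablock subsection, every point of $\partial\mathbb{E}$ with $|x_3|<1$ is represented \emph{uniquely} as $x=(\beta_1+\bar\beta_2\lambda,\ \beta_2+\bar\beta_1\lambda,\ \lambda)$ with $|\beta_1|+|\beta_2|=1$ and $\lambda=x_3\in\mathbb{D}$; the linear system recovering $(\beta_1,\beta_2)$ from $x$ is invertible precisely because $|x_3|<1$. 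The key observation is that, with $(\beta_1,\beta_2)$ held fixed, the assignment
\[
\psi_{\beta_1,\beta_2}(\lambda)=\bigl(\beta_1+\bar\beta_2\lambda,\ \beta_2+\bar\beta_1\lambda,\ \lambda\bigr),\qquad \lambda\in\mathbb{D},
\]
is holomorphic in $\lambda$ and, by uniqueness of the $\beta$-representation, keeps $|\beta_1(\cdot)|+|\beta_2(\cdot)|\equiv1$; hence it is a $1$-dimensional analytic disc lying entirely in $\partial\mathbb{E}\cap\{|x_3|<1\}$. These discs are the level sets of the continuous map $x\mapsto(\beta_1,\beta_2)$, so they foliate this part of $\partial\mathbb{E}$.

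Next I would lift to the fibre variable. Along $\psi_{\beta_1,\beta_2}$ one computes $g(\lambda):=x_1x_2-x_3=\beta_1\beta_2-2|\beta_1||\beta_2|\lambda+\bar\beta_1\bar\beta_2\lambda^2$, which factors as a constant times $(\lambda-e^{i\phi})^2$ with $e^{i\phi}=\beta_1\beta_2/|\beta_1\beta_2|$; in particular $g$ is holomorphic and nowhere zero on $\mathbb{D}$ whenever $\beta_1\beta_2\neq0$, and admits a holomorphic square root $\sqrt{g(\lambda)}$. Setting
\[
\varphi_{\beta_1,\beta_2}(\zeta_1,\zeta_2)=\bigl(\sqrt{g(\zeta_1)}\,\zeta_2,\ \beta_1+\bar\beta_2\zeta_1,\ \beta_2+\bar\beta_1\zeta_1,\ \zeta_1\bigr)
\]
gives a holomorphic map on $\mathbb{D}^2$ with $|a|^2=|g(\zeta_1)|\,|\zeta_2|^2<|x_1x_2-x_3|$, so by the preceding lemma its image lies in $\partial_2\mathbb{H}$ (here $|x_3|=|\zeta_1|<1$ and $g\neq0$ place us in the correct region). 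The Jacobian has rank $2$ because $\partial_{\zeta_2}\varphi$ is nonzero only in the $a$-slot while $\partial_{\zeta_1}\varphi$ carries the $x_3$-direction, so the disc is non-degenerate. Finally, $\beta_1\beta_2\neq0$ is equivalent to $g\not\equiv0$, i.e.\ to $x_1x_2\neq x_3$, which is exactly the region defining $\partial_2\mathbb{H}$; conversely each $(a^0,x^0)\in\partial_2\mathbb{H}$ determines a unique $(\beta_1,\beta_2)$ from $x^0$ and then a unique $\zeta_2=a^0/\sqrt{g(x^0_3)}$ of modulus $<1$, so the discs $\varphi_{\beta_1,\beta_2}$ are disjoint and cover $\partial_2\mathbb{H}$, yielding the foliation.

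The main obstacle is the first step: recognizing the correct analytic disc structure of $\partial\mathbb{E}$. The non-obvious point is that, once $(\beta_1,\beta_2)$ are frozen, the components $x_1,x_2$ become \emph{affine holomorphic} functions of $x_3=\lambda$, so a set that a priori looks like a real boundary slice in fact carries honest holomorphic discs; checking that these discs remain in $\partial\mathbb{E}$ for every $\lambda\in\mathbb{D}$ (not merely to first order) is precisely what the uniqueness of the $\beta$-representation supplies. By contrast the fibre direction is routine: since $g$ is nonvanishing on the simply connected disc $\mathbb{D}$, a holomorphic square root exists automatically, the explicit factorization being merely a convenient witness.
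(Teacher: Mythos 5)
Your proposal is correct, but it follows a genuinely different route from the paper. The paper's proof is local and perturbative: it first uses the normalization from the preceding lemma (via the explicit automorphisms and Lemma 9.7 of Biswas--Pal--Tomar) to reduce to boundary points of the form $(a,0,r,1-r)$, then builds a bidisc through such a point from auxiliary holomorphic data $h:\mathbb{D}\to\mathbb{D}$ with $h(0)=-(1-r)$ and $g$ with $g(0)=a$, subject to $\epsilon$-smallness conditions ensuring $|g(\mu)|^2<|x_1(\lambda)x_2(\lambda)-x_3(\lambda)|$; this yields a $2$-dimensional analytic disc through each point of $\partial_2\mathbb{H}$, but the resulting discs are neither canonical nor pairwise disjoint, so "foliation" is being used there in the weak sense of "a disc through every point" (which is all the main theorem needs). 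Your construction is global and canonical: you parametrize $\partial\mathbb{E}\cap\{|x_3|<1\}$ by the level sets of $x\mapsto(\beta_1,\beta_2)$ with $|\beta_1|+|\beta_2|=1$, observe that freezing $(\beta_1,\beta_2)$ makes $x_1,x_2$ affine holomorphic in $\lambda=x_3$ (membership in $\partial\mathbb{E}$ follows from the identity $x_1-\bar x_2x_3=\beta_1(1-|\lambda|^2)$ together with item (5) of Lemma \ref{Le2.3}, and constancy of the $\beta$'s along the disc from the inversion formulas), compute
\[
x_1x_2-x_3=|\beta_1\beta_2|\,e^{-i\phi}\bigl(\lambda-e^{i\phi}\bigr)^2,\qquad e^{i\phi}=\beta_1\beta_2/|\beta_1\beta_2|,
\]
which is nonvanishing on $\mathbb{D}$ exactly when $\beta_1\beta_2\neq0$ (equivalently $x_1x_2\neq x_3$), and then lift by a holomorphic square root so that the fibre condition $|a|^2<|x_1x_2-x_3|$ from the preceding lemma is met. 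This buys a strictly stronger conclusion than the paper proves: your discs are pairwise disjoint, cover $\partial_2\mathbb{H}$, and realize it as an honest foliation by embedded bidiscs, with no normalization by automorphisms and no perturbation argument; the cost is that you lean harder on the structure theory of $\partial\mathbb{E}$ (the $\beta$-representation and its uniqueness), whereas the paper only needs one computation at normalized points plus the transitivity supplied by $\mathrm{Aut}(\mathbb{H})$.
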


\begin{proof}

Define the  functions
$$
x_1(\lambda) := -\frac{1 - r + h(\lambda)}{2 - r}, \quad 
x_2(\lambda) := \frac{1 + (1 - r)h(\lambda)}{2 - r}, \quad 
x_3(\lambda) := -h(\lambda),
$$
where $ h: \mathbb{D} \to \mathbb{D} $ is a holomorphic function with $h(0) = -(1 - r)$ and for some fixed  $0 < r < 1$.

Direct computation establishes that:
\begin{gather*}
(x_1(\lambda), x_2(\lambda), x_3(\lambda)) \in \partial\mathbb{E} \setminus b\mathbb{E}, \\
\left| x_1(\lambda)x_2(\lambda) - x_3(\lambda) \right| = \frac{1 - r}{(2 - r)^2} |1 - h(\lambda)| \neq 0.
\end{gather*}

Given $a > 0$ satisfying  $a^2 < 1 - r$, set $\epsilon := \frac{1 - r - a^2}{2} > 0$. Since \( x_1(\lambda)x_2(\lambda) - x_3(\lambda) \) is holomorphic and non-vanishing, with $|x_1(0)x_2(0) - x_3(0)| = 1 - r$, we may choose $h(\lambda)$ such that
\begin{equation*}
1 - r - \epsilon < \left| x_1(\lambda)x_2(\lambda) - x_3(\lambda) \right| < 1 - r + \epsilon, \quad \forall \lambda \in \mathbb{D}.
\end{equation*}

Correspondingly, there exists a holomorphic function $g: \mathbb{D} \to \mathbb{C}$ with $ g(0) = a $ and 
\begin{equation*}
a^2 - \epsilon < |g(\mu)|^2 < a^2 + \epsilon, \quad \forall \mu \in \mathbb{D}.
\end{equation*}

This implies the strict inequality
\begin{equation*}
|g(\mu)|^2 < \left| x_1(\lambda)x_2(\lambda) - x_3(\lambda) \right|.
\end{equation*}

Consequently, the mapping 
\[
(\mu, \lambda) \mapsto (g(\mu), x_1(\lambda), x_2(\lambda), x_3(\lambda)), \quad \mu, \lambda \in \mathbb{D},
\]
defines a \( 2 \)-dimensional analytic disc embedded in \( \partial_2\mathbb{H} \) that intersects the boundary point \( (a, 0, r, 1 - r) \).
\end{proof}

\textbf{III.} The topological codimension of $\partial_3\mathbb{H}$ is equal to $2$.

\section{Proof of Theorem \ref{Main}}

Let $f=(f_1,f_2,f_3,f_4):\mathbb{H}\rightarrow\mathbb{H}$ be a proper holomorphic mapping. By Theorem \ref{Ex}, we can see that $f$ extends holomorphically
to a neighborhood $V$ of $\mathbb{H}$ with $f(\partial\mathbb{H})\subset\partial\mathbb{H}$. We define
$$S:=\left\{\xi\in V: J_f(\xi)=0\right\},$$
where $J_\varphi(\xi)=\det\left(\frac{\partial f_i}{\partial\xi_j}\right)$ is the complex Jacobian determinant of $f(\xi)$ $(\xi\in V)$.

Consider the mapping $\varphi:\mathbb{E}\rightarrow\mathbb{E}$ defined by 
$$\varphi:(x_1,x_2,x_3)\mapsto\left(f_2(0,x_1,x_2,x_3),f_3(0,x_1,x_2,x_3),f_4(0,x_1,x_2,x_3)\right).$$
Thus, $\varphi$ extends holomorphically to the closure $\overline{\mathbb{E}}$. Next, we will prove that $\varphi:\mathbb{E}\rightarrow\mathbb{E}$ is a proper holomorphic  self-mapping of $\mathbb{E}$.

By Lemma \ref{b2}, for each point  $q \in \partial_2\mathbb{H}$, there exists a $2$-dimensional analytic disk contained entirely within $\partial_2\mathbb{H} $ that passes through the point $q$. However, Lemma \ref{b1} implies that no such $2$-dimensional analytic disk lies in $\partial_1\mathbb{H}$. Consequently, the local biholomorphism of $f$ on $\partial_2\mathbb{H}\setminus S$ ensures that for any point $p \in \partial_2\mathbb{H}\setminus S$, we can find a $2$-dimensional analytic disk that passes through $f(p)$ and is contained within $\partial\mathbb{H}$. It follows that $f(p) \in \partial\mathbb{H} \setminus \partial_1\mathbb{H}\subset\mathbb{C}\times\partial\mathbb{E}$, for all $p \in \partial_2\mathbb{H}\setminus S$.

By Lemma \ref{Le2.3}, we have 
$$\partial\mathbb{E}=\left\{x=(x_1,x_2,x_3)\in\overline{\mathbb{D}}^3: 
|x_1|^2+|x_2|^2-|x_3|^2+2|x_1x_2-x_3|=1\right\}.$$
Thus, for  $p \in \partial_2\mathbb{H}\setminus S$, we have 
\begin{equation*}
|f_2(a,x)|^2+|f_3(a,x)|^2-|f_4(a,x)|^2 +2|f_2(a,x)f_3(a,x)-f_4(a,x)|=1.       
   \end{equation*}

Since $\partial_2\mathbb{H}\setminus S$  is dense in 
 $\partial_2\mathbb{H}$ and  $f$ is continuous on $\overline{\mathbb{H}}$, it follows that

\begin{equation*}
|f_2(a,x)|^2+|f_3(a,x)|^2-|f_4(a,x)|^2+2|f_2(a,x)f_3(a,x)-f_4(a,x)|=1      \end{equation*}
for all $p \in \partial_2\mathbb{H}$.

Consider a point $(0,x)$ where $x=(x_1,x_2,x_3)$ satisfies $x\in\partial\mathbb{E}\setminus b\mathbb{E}$ and $x_1x_2\neq x_3$. Then $(0,x)\in\partial_2\mathbb{H}$, which implies that 
\begin{equation*}
|f_2(0,x)|^2+|f_3(0,x)|^2-|f_4(0,x)|^2+2|f_2(0,x)f_3(0,x)-f_4(0,x)|=1      
\end{equation*}
for all $(0,x)\in\partial_2\mathbb{H}$.
By the continuity of $f$ on $\overline{\mathbb{H}}$,
this extends to all $x\in\partial\mathbb{E}$. Since clearly $|f_k(0,x)|\leq 1$, for $k=2,3,4$ and all $x\in\partial\mathbb{E}$, the map $\varphi$ maps $\partial\mathbb{E}$ onto $\partial\mathbb{E}$. Consequently,
$$\varphi:x\mapsto\left(f_2(0,x),f_3(0,x),f_4(0,x)\right)$$
is a proper holomorphic self-mapping of $\mathbb{E}$.
Therefore, $\varphi$ is an automorphism of $\mathbb{E}$ by Theorem 1 in Kosi\'{n}ski \cite{Kosi}. Furthermore, 
\begin{equation}\label{3.1}
\varphi(b\mathbb{E})\subset b\mathbb{E}.
\end{equation}

Applying Lemma \ref{le2.5} yields the following.
\begin{equation}\label{3.2}
    b\mathbb{E}=\{|x_3|=1, x_1=\bar{x}_2x_3,\ \ \text{and}\ \ |x_2|<1\}.
\end{equation}

Fix $x=(x_1,x_2,x_3)\in b\mathbb{E}$ with $x_3\neq x_1x_2$. This implies $x$ is a non-triangular point, since otherwise the conditions $|x_1|=|x_2|=1$ would force $a=0$ by Lemma \ref{le2.3}. Consequently, for $|a|^2<e^{-u(x)}$, we have $(a,x)\in\partial\mathbb{H}$. However, \eqref{3.1} and \eqref{3.2} imply that
\begin{equation*}
    |f_4(0,x)|=1
\end{equation*}
for all $x\in b\mathbb{E}$. Therefore, for each fixed $x\in b\mathbb{E}$ with $x_3\neq x_1x_2$, the function $f_4(a,x)$ attains its maximum modulus on the disk $\left\{a: |a|^2<e^{-u(x)}\right\}$ at $a=0$. By the maximum modulus principle, $f_4(a,x)$ is thus independent of $a$ at such boundary points.

Continuing to fix $x\in b\mathbb{E}$ with $x_3\neq x_1x_2$, we deduce from \eqref{3.2} that 
\[
f_4(a,x) = e^{i\theta} \quad \text{and} \quad f_2(a,x) = \overline{f_3(a,x)}e^{i\theta}
\]
for some $\theta\in\mathbb{R}$ and all $|a|^2<e^{-u(x)}$. Consequently, both $f_2$ and $f_3$ are independent of the variable $a$ at such boundary points.

Consequently, for every positive integer $k$ and all $x \in b\mathbb{E} \setminus \{x_3 = x_1x_2\}$, the partial derivatives vanish at the point $(0,x)$, i.e.,
\[
\frac{\partial^k f_2}{\partial a^k}(0,x) = 0, \quad
\frac{\partial^k f_3}{\partial a^k}(0,x) = 0, \quad
\frac{\partial^k f_4}{\partial a^k}(0,x) = 0.
\]

By holomorphicity of $f$ on $\overline{\mathbb{H}}$, these vanish identically on $b\mathbb{E}$. Since $b\mathbb{E}$ is the Shilov boundary of $\mathbb{E}$, we extend these equations to the entire domain. Specifically, for all $k \in \mathbb{Z}^+$ and $x \in \mathbb{E}$
\[
\frac{\partial^k f_2}{\partial a^k}(0,x) \equiv 0, \quad
\frac{\partial^k f_3}{\partial a^k}(0,x) \equiv 0, \quad
\frac{\partial^k f_4}{\partial a^k}(0,x) \equiv 0.
\]

Therefore, the functions $f_2(a,x), f_3(a,x)$, and $f_4(a,x)$ defined on $\mathbb{H}$ are independent of $a$. Consequently, the proper holomorphic mapping $f$ admits the following representation
$$f(a,x) = \big( f_1(a,x), f_2(x), f_3(x), f_4(x) \big), \quad x = (x_1,x_2,x_3).$$

Thus, by Theorem 4.1 in Young \cite{Young}, the automorphisms \eqref{form1} and \eqref{form2}, there exists $T\in\textrm{Aut}(\mathbb{H})$  such that 
$$G:=T\circ f(a,x)=(F_1(a,x), x), \quad x=(x_1,x_2,x_3).$$

Subsequently, by Lemma 3.7 in  Biswas-Pal-Tomar \cite{Hexablock}, we define the following subdomain
$$\Omega:=\left\{(a,\lambda,\lambda,\lambda^2)\in\mathbb{H}\right\}
=\left\{(a,\lambda,\lambda,\lambda^2)\in\mathbb{C}^4: |a|^2+|\lambda|^2<1\right\}.$$
It follows that $G(a,x)$ preserves $\Omega$, i.e., $G(\Omega)\subset\Omega$.

Correspondingly, define  
$$\widetilde\Omega:=\left\{(a,\lambda): (a,\lambda,\lambda,\lambda^2)\in\mathbb{H}\right\}
=\left\{(a,\lambda)\in\mathbb{C}^2: |a|^2+|\lambda|^2<1\right\},$$
which is the unit ball in $\mathbb{C}^2$. Thus, the mapping $\phi:\widetilde\Omega\rightarrow\widetilde\Omega$ given by
$$\phi: (a,\lambda)\mapsto\left(F_1(a,\lambda,\lambda,\lambda^2), \lambda\right)$$
is a proper holomorphic self-mapping of $\widetilde\Omega$. Then by Alexander's Theorem,  there exists $\eta\in\mathbb{T}$ such that
$$\phi(a,\lambda)=(\eta a, \lambda).$$
Hence,
$$F_1(a,\lambda,\lambda,\lambda^2)=\eta a.$$

Thus, the complex Jacobian determinant
$$J_{G}(a,0,0,0)\equiv 1, \quad |a|<1.$$

Since  $G(a, x)$ is holomorphic on $\overline{\mathbb{H}}$, there exists a neighborhood $U \subset \mathbb{E}$ of $ \mathbf{0} = (0,0,0) $ such that the Jacobian determinant satisfies
$$
J_G(a, \mathbf{0}) \neq 0 \quad \text{for all }  (a, x) \in \overline{\mathbb{H}} \cap (\mathbb{C} \times U).
$$
By \eqref{u}, direct computation yields $\frac{\partial^2 u}{\partial x_1 \partial \bar{x}_1}(0,0,0) = 1$. Thus, after shrinking $U$ if necessary, we may assume
\begin{equation}\label{eq3.3}
\frac{\partial^2 u}{\partial x_1 \partial \bar{x}_1}(x_1, x_2, x_3) \neq 0 \quad \text{for } (x_1,x_2,x_3) \in U.
\end{equation}

Thus, the restriction of $G(a, x) = (F_1(a, x), x)$ to the domain $ \mathbb{H} \cap (\mathbb{C} \times U)$ is a biholomorphic mapping. Consequently, for each fixed  $x= (x_1, x_2, x_3) \in U$, define the associated disk
$$
\Omega_{x} = \{ a \in \mathbb{C} : |a|^2 < e^{-u(x)} \}.
$$
Observe that  $F_1(\cdot, x)$ is a biholomorphic self-mapping of $\Omega_{x}$ for every $x \in U$. This implies that
$$
F_1(a, x) = e^{i \theta(x)} \frac{a - F_1^{-1}(0, x)}{1 - \overline{F_1^{-1}(0, x)} \, e^{u(x)} a}.
$$

Define 
$$H(a,x):= e^{-i\theta(x)} \left(1 - \overline{F_1^{-1}(0,x)} \, e^{u(x)} a\right).
$$
By Riemann's removable singularity theorem, $H(a,x)$ is holomorphic on  $\mathbb{H} \cap (\mathbb{C} \times U)$. This implies both
$$e^{i\theta(x)} = H(0,x) \quad \text{and} \quad e^{-i\theta(x)} \overline{F_1^{-1}(0,x)} e^{u(x)} = -\frac{\partial H}{\partial a}(0,x)$$
are holomorphic on $U$. Since $|e^{-i\theta(x)}| = 1$, there exists a real constant $\theta$ such that $e^{-i\theta(x)} \equiv e^{-i\theta}$ throughout $U$.

Thus the function
\[
L(x) := \overline{F_1^{-1}(0,x)}  e^{u(x)}
\]
is holomorphic throughout $U$. Under the assumption that $F_1^{-1}(0,x)$ does not vanish identically, there must exists an open subset $V \subset U$ such that $F_1^{-1}(0,x) \neq 0$ holds for every $x \in V$. Consequently, we deduce that 
\[
u(x) = \ln \left| \frac{L(x)}{F_1^{-1}(0,x)} \right|, \quad x \in V,
\]
which implies that $u$ is pluriharmonic on $V$. This conclusion directly contradicts condition \eqref{eq3.3}. We therefore conclude that $F_1^{-1}(0,x) \equiv 0$ identically on $U$.

This vanishing result leads to the simplified functional form
\[
F_1(a,x) = e^{i\theta} a \quad \text{for} \quad (a,x) \in \mathbb{H} \cap (\mathbb{C} \times U),
\]
and consequently, by the identity theorem, we obtain 
\[
G(a,x) = (e^{i\theta} a,  x) .
\]
Thus, we establish that $f \in \mathrm{Aut}(\mathbb{H})$, which completes the proof of Theorem \ref{Main}.

\vspace{10pt}
\noindent\textbf{Acknowedgments}\quad 
This work was supported by the National Natural Science Foundation of China (No. 12361131577, 12271411, 12201475). Guicong Su was also supported by the Fundamental Research Funds for the Central Universities, Grant No. 104972025KFYjc0095.

\addcontentsline{toc}{section}{References}
\phantomsection
\renewcommand\refname{References}


\begin{thebibliography}{99}
\setlength{\parskip}{3pt}

\bibitem{Abouhajar} 
A.~A. Abouhajar, \textit{Function theory related to $H^\infty$ control}, PhD Thesis, Newcastle University, 2007.

\bibitem{Abouhajar-White-Young} 
A.~A. Abouhajar, M.~C. White and N.~J. Young, \textit{A Schwarz lemma for a domain related to $\mu$-synthesis}, J. Geom. Anal. \textbf{17} (2007), 717--750.

\bibitem{3} 
J.~Agler, Z.~A. Lykova and N.~J. Young, \textit{Extremal holomorphic maps and the symmetrized bidisc}, Proc. London Math. Soc. \textbf{106} (2013), 781--818.

\bibitem{4}
J.~Agler, Z.~A. Lykova and N.~J. Young, \textit{The complex geometry of a domain related to $\mu$-synthesis}, J. Math. Anal. Appl. \textbf{422} (2015), 508--543.

\bibitem{11}
J.~Agler and N.~J. Young, \textit{The hyperbolic geometry of the symmetrized bidisc}, J. Geom. Anal. \textbf{14} (2004), 375--403.

\bibitem{14}
O.~M.~O. Alsalhi and Z.~A. Lykova, \textit{Rational tetra-inner functions and the special variety of the tetrablock}, J. Math. Anal. Appl. \textbf{506} (2022), Paper No. 125534, 52~pp.

\bibitem{21}
T.~Bhattacharyya, S.~Pal and S.~Shyam Roy, \textit{Dilations of $\Gamma$-contractions by solving operator equations}, Adv. Math. \textbf{230} (2012), 577--606.

\bibitem{22}
T.~Bhattacharyya, \textit{The tetrablock as a spectral set}, Indiana Univ. Math. J. \textbf{63} (2014), 1601--1629.

\bibitem{25}
B.~Bisai and S.~Pal, \textit{Structure theorems for operators associated with two domains related to $\mu$-synthesis}, Bull. Sci. Math. \textbf{159} (2020), Paper No. 102822, 30~pp.

\bibitem{Hexablock} 
I.~Biswas, S.~Pal and N.~Tomar, \textit{The Hexablock: a domain associated with the $\mu$-synthesis in $M_2(\mathbb{C})$}, https://arxiv.org/abs/2506.15149

\bibitem{Do}
J.~C. Doyle and G.~Stein, \textit{Multivariable feedback design: concepts for a classical/modern synthesis}, IEEE Trans. Automat. Control \textbf{26} (1981), 4--16.

\bibitem{Fra}
B.~A. Francis, \textit{A course in $H^{\infty}$ Control Theory}, Lecture Notes in Control and Information Sciences, Vol. 88, Springer, Berlin (1987).

\bibitem{56}
A.~Jindal and P.~Kumar, \textit{Rational penta-inner functions and the distinguished boundary of the pentablock}, Complex Anal. Oper. Theory \textbf{16} (2022), Paper No. 120, 12~pp.

\bibitem{57}
A.~Jindal and P.~Kumar, \textit{Operator theory on the pentablock}, J. Math. Anal. Appl. \textbf{540} (2024), Paper No. 128589, 17~pp.

\bibitem{Kosi}
L.~Kosi\'{n}ski, \textit{Geometry of quasi-circular domains and application to tetrablock}, Proc. Amer. Math. Soc. \textbf{139} (2011), 559--569.

\bibitem{59}
L.~Kosi\'{n}ski, \textit{The group of automorphisms of the pentablock}, Complex Anal. Oper. Theory. \textbf{6} (2015), 1349--1359.

\bibitem{60}
L.~Kosi\'{n}ski and W.~Zwonek, \textit{Nevanlinna-Pick problem and uniqueness of left inverses in convex domains, symmetrized bidisc and tetrablock}, J. Geom. Anal. \textbf{26} (2016), 1863--1890.

\bibitem{78}
S.~Pal and S.~Roy, \textit{A Schwarz lemma for two families of domains and complex geometry}, Complex Var. Elliptic Equ. \textbf{66} (2021), 756--782.

\bibitem{79}
S.~Pal and N.~Tomar, \textit{Operators associated with the pentablock and their relations with biball and symmetrized bidisc}, https://arxiv.org/abs/2309.15080v2

\bibitem{80}
P.~Pflug and W.~Zwonek, \textit{Description of all complex geodesics in the symmetrised bidisc}, Bull. London Math. Soc. \textbf{37} (2005), 575--584.

\bibitem{85}
G.~Su, Z.~Tu and L.~Wang, \textit{Rigidity of proper holomorphic self-mappings of the pentablock}, J. Math. Anal. Appl. \textbf{424} (2015), 460--469.

\bibitem{86}
G.~Su, \textit{Geometric properties of the pentablock}, Complex Anal. Oper. Theory \textbf{14} (2020), Paper No. 44, 14~pp.

\bibitem{89}
N.~Tomar, \textit{A new characterization for the distinguished boundaries of a few domains related to $\mu$-synthesis}, Proc. Indian Acad. Sci. (Math. Sci.) (2024).

\bibitem{Young} 
N.~J. Young, \textit{The automorphism group of the tetrablock}, J. Lond. Math. Soc. \textbf{77} (2008), 757--770.

\bibitem{94}
W.~Zwonek, \textit{Geometric properties of the tetrablock}, Arch. Math. (Basel) \textbf{100} (2013), 159--165.

\end{thebibliography}
\end{document}